\DeclareMathOperator\supp{supp}
\newtheorem{theorem}{Theorem}[section]
\newtheorem{proposition}[theorem]{Proposition}
\newtheorem{definition}[theorem]{Definition}
\theoremstyle{definition}
\newtheorem{remark}[theorem]{Remark}
\numberwithin{equation}{section}
\newcommand*\rd{\mathbb{R}^d}
\newcommand{\al} {\alpha}
\newcommand{\Om} {\Omega}
\newcommand{\la} {\lambda}
\newcommand{\no} {\nonumber}
\newcommand{\noi} {\noindent}
\newcommand{\ra} {\rightarrow}
\def\dx{\,{\rm d}x}
\def\dy{\,{\rm d}y}
\def\C{{\mathcal C}}
\def\R{{\mathbb R}}
\def\({{\Big(}}
\def\){{\Big)}}
\def\dx{\,{\rm d}x}
\DeclarePairedDelimiter\abs{\lvert}{\rvert}%
\DeclarePairedDelimiter\norm{\lVert}{\rVert}%
\def\wps{{W_0^{s,p}(\Omega )}}
\title[Fractional monotonicity over annuli]{Strict monotonicity of the first $q$-eigenvalue of the fractional $p$-Laplace operator over annuli}
\author[K. Ashok~Kumar and N. Biswas]{K Ashok Kumar\,\orcidlink{0000-0002-4098-3705} \and Nirjan Biswas$^\dagger$\,\orcidlink{0000-0002-3528-8388}}
\address{\rm 
TIFR Centre For Applicable Mathematics Bengaluru\\ Post Bag No 6503, Sharada Nagar, Bengaluru 560065, India}
\email[Ashok]{ashok23@tifrbng.res.in, srasoku@gmail.com} \email[Nirjan]{nirjan22@tifrbng.res.in}
\thanks{$^\dagger$Corresponding author}
\subjclass[2020]{Primary 35R11, 49Q10; Secondary 35B51, 35B06, 47J10}
\keywords{nonlocal operator, polarizations, strong comparison principle, Faber-Krahn inequality, monotonicity of the first eigenvalue.}
\begin{document}
\begin{abstract}
    Let $B, B'\subset \mathbb{R}^d$ with $d\geq 2$ be two balls such that $B'\subset \subset B$ and the position of $B'$ is varied within $B$. For $p\in (1, \infty ),$ $s\in (0,1)$, and $q \in [1, p^*_s)$ with $p^*_s=\frac{dp}{d-sp}$ if $sp < d$ and $p^*_s=\infty $ if $sp \geq d$, let $\lambda ^s_{p,q}(B\setminus \overline{B'})$ be the first $q$-eigenvalue of the fractional $p$-Laplace operator $(-\Delta _p)^s$ in $B\setminus \overline{B'}$ with the homogeneous nonlocal Dirichlet boundary conditions. We prove that $\lambda ^s_{p,q}(B\setminus \overline{B'})$ strictly decreases as the inner ball $B'$ moves towards the outer boundary $\partial B$. To obtain this strict monotonicity, we establish a strict Faber-Krahn type inequality for $\lambda _{p,q}^s(\cdot )$ under polarization. This extends some monotonicity results obtained by Djitte-Fall-Weth (Calc. Var. Partial Differential Equations, 60:231, 2021) in the case of $(-\Delta )^s$ and $q=1, 2$ to $(-\Delta _p)^s$ and $q\in [1, p^*_s).$ Additionally, we provide the strict monotonicity results for the general domains that are difference of Steiner symmetric or foliated Schwarz symmetric sets in $\mathbb{R}^d$.
\end{abstract}
\maketitle
\section{Introduction}
For $d\geq 1$, $p\in (1,\infty )$, and $s\in (0,1)$, the Gagliardo seminorm $[\cdot ]_{s,p}$ in $\mathbb{R}^d$ is given by
\begin{align*}
    [u]_{s,p}= \left( \int _{\mathbb{R}^d} \int _{\mathbb{R}^d} \frac{|u(x)-u(y)|^p}{|x-y|^{d+sp}}\dy \dx \right)^{\frac{1}{p}},
\end{align*}
and the fractional critical Sobolev exponent $p^*_s$ is defined as $p^*_s =\frac{dp}{d-sp}$ if $sp<d$, and  $p^*_s =\infty$ if $sp \geq d$. For a bounded open set $\Omega \subset \mathbb{R}^d$ and $q\in [1,p^*_s)$, we consider the best constant  $\lambda ^s_{p,q}(\Omega )$ in the following family of Sobolev inequalities:
\begin{equation}\label{SObolev-ineq}
    \left(\lambda ^s_{p,q}(\Omega ) \right)^{\frac{1}{p}}\|u\|_{L^q(\Omega )} \leq [u]_{s,p}, \quad \forall \, u\in W^{s,p}_0(\Omega ),
\end{equation}
where $W^{s,p}_0(\Omega ) := \{ u \in L^p(\rd) : [u]_{s,p} < \infty, \text{ and } u = 0 \text{ in } \rd \setminus \Omega  \}$ is the fractional Sobolev space. The variational characterization for $\lambda ^s_{p,q}(\Omega )$ in~\eqref{SObolev-ineq} is given by
\begin{equation}\label{var-char-Ineq}
    \lambda ^s_{p,q}(\Omega ) := \inf \left\{[u]^p_{s,p}: u\in W^{s,p}_0(\Omega ) \text{ with } \|u\|_{L^q(\Omega )}=1\right\}.
\end{equation}
For $1\leq q<p^*_s$, using the compact embedding $W^{s,p}_0(\Omega )\hookrightarrow L^q(\Omega )$ (see~\cite[Corollary~2.8]{BrLiPa}) and direct variational methods, a minimizer for~\eqref{var-char-Ineq} exists.
Without loss of generality, we can assume that the minimizer is non-negative since $[\abs{u}]_{s,p} \leq [u]_{s,p}$ holds for $u \in \wps$.
Further, any minimizer $u\in W^{s,p}_0(\Omega )$ of~\eqref{var-char-Ineq} satisfies the following identity: for any $\phi \in W^{s,p}_0(\Omega )$,
\begin{align*}
        \int _{\rd} \int _{\rd} \frac{|u(x)-u(y)|^{p-2}(u(x)-u(y))}{\abs{x-y}^{d+sp}} (\phi(x)-\phi(y))\, \dy \dx = \lambda ^s_{p,q}(\Omega )\int_{\Omega } u(x)^{q-1} \phi(x) \, \dx ,
\end{align*}
and this implies that $u$ is a weak solution to the following nonlinear fractional eigenvalue problem: 
\begin{equation}\label{Eigen_eqn}
    \begin{aligned}
        (-\Delta _p)^s u &= \lambda ^s_{p,q}(\Omega ) u^{q-1}  \mbox{ in } \Omega ,\\
        u&=0 \mbox{ in } \mathbb{R}^d\setminus \Omega \text{ with } \|u\|_{L^q(\Omega )}=1,
    \end{aligned}
\end{equation}
where $(-\Delta _p)^s$ is the fractional $p$-Laplace operator that is defined as
\begin{equation*}
    (-\Delta _p)^s u (x) = \textrm{P.V.}\int _{\mathbb{R}^d} \frac{|u(x)-u(y)|^{p-2}(u(x)-u(y))}{|x-y|^{d+sp}} \,\mathrm{d}y
\end{equation*}
where P.V. denotes the principle value. We call the best constant $\lambda _{p,q}^s(\Omega )$ to be the first \emph{$q$-eigenvalue} of the fractional $p$-Laplace operator, and the corresponding minimizer to be a first~\emph{$q$-eigenfunction} of~\eqref{Eigen_eqn}. 
In Proposition~\ref{regular}, we show that the first $q$-eigenfunctions are in $\C^{0,\alpha }(\overline{\Omega })$, for some  $\al \in (0, s]$.  Further, by the strong maximum principle~\cite[Theorem A.1]{BF2014}, the first $q$-eigenfunctions are strictly positive in $\Omega $. In the homogeneous case of $p=q$, $\lambda _{p,p}^s(\Omega )$ is the first eigenvalue of $(-\Delta _p)^s$ in $\Omega $ with the homogeneous nonlocal Dirichlet boundary condition. In the case of $q=1$, the first $1$-eigenvalue is related to the fractional $p$-torsional rigidity $\mathscr{T}_{s,p}(\Omega )$ of $\Omega$, which is defined as   
\begin{equation*}
    \mathscr{T}_{s,p}(\Omega )= \left(\int _{\Omega } w \dx \right)^{p-1}=\left( \int _{\mathbb{R}^d} \int _{\mathbb{R}^d} \frac{|w(x)-w(y)|^p}{|x-y|^{d+sp}}\dy \dx \right)^{p-1}, 
\end{equation*}
where $w\in W^{s,p}_0(\Omega )$ is a positive weak solution of the following fractional $p$-torsion problem:
\begin{equation*}
    \begin{aligned}
        (-\Delta _p)^s w &= 1 \mbox{ in } \Omega ,\\
        w&=0 \mbox{ in } \mathbb{R}^d\setminus \Omega .
    \end{aligned}
\end{equation*}
Using simple scaling arguments, one can verify that 
\begin{align}\label{relation}
    \mathscr{T}_{s,p}(\Omega )= \left(\lambda ^s_{p,1}(\Omega )\right)^{-1}.
\end{align}

In this paper, we consider the following family of annular domains in $\mathbb{R}^d$: for $0<r<R<\infty $, 
\begin{equation*}
    \mathcal{A}_{r,R}:=\left\{\Omega _t:= B_R(0)\setminus \overline{B}_r(t e_1) \subset \mathbb{R}^d : 0\leq t <R-r \right\},
\end{equation*}
where $B_r(z)$ is the open ball with radius $r\geq 0$ centered at $z\in \mathbb{R}^d$, and $\overline{B}_r(z)$ is the closure of the open ball $B_r(z)$ in $\mathbb{R}^d$. Our objective is to study an optimal domain for $\lambda ^s_{p,q}(\cdot )$ over $\mathcal{A}_{r,R}$, and also analyze the behavior of $\lambda ^s_{p,q}(\Omega _t)$ for $0\leq t <R-r$. In the local case $s=1$, Hersch~\cite{Hersch} considered the first eigenvalue $\lambda ^1_{2,2}(\cdot )$ of the Laplace operator over $\mathcal{A}_{r,R}$ with $d=2$, and proved that \emph{`$\lambda ^1_{2,2}(\Omega _t)$ attains its maximum only at $t=0$.'} Later, many authors proved this phenomenon for $d\geq 2$ by establishing \emph{`the strict monotonicity of $\lambda ^1_{p,p}(\Omega _t)$ of the $p$-Laplace operator.'} For example, Harrell-Kr\"oger-Kurata~\cite{HarrelKroger}, and independently Kesavan~\cite{Kesavan} for $p=q=2$; and Anoop-Bobkov-Sasi~\cite{Anoop18} for $1<p=q<\infty $. Further, Bobkov-Kolonitskii~\cite{BobKol20} proved the strict monotonicity of $\lambda ^1_{p,q}(\cdot )$ using the domain perturbations and polarization method for $p\in (1, \infty )$ and $q\in [1,p^*)$, where $p^*$ is the critical Sobolev exponent. In the nonlocal case $s\in (0,1)$, Djitte-Fall-Weth~\cite{Djitte-Fall-Weth2021} proved the strict monotonicity of $\lambda ^s_{p,q}(\Omega _t)$ when  $p=2$ and $q=1,2$. Indeed, the strict monotonicity implies that $\lambda ^s_{p,q}(\Omega _t)$ attains its maximum only at $t=0$. One of the main ingredients they have used is the Hadamard perturbation formula for the derivative of $\lambda _{p,q}^s(\cdot)$. The reflection methods and the strong comparison principle give a strict sign for this derivative. For $p=2$, the authors of~\cite{HarrelKroger} (when $s\in (0,1)$) and the authors of~\cite{Kesavan, Djitte-Fall-Weth2021} (when $s=1)$, determined a strict sign for the derivative of $\lambda ^1_{p,q}(\cdot)$ using the strong comparison principle. In the local case $s=1$, the strong comparison principle is unavailable for any $p\in (1, \infty )$ except $p=2$. However, the authors of~\cite{BobKol20, Anoop18} carefully utilized the geometry of annular domains and the qualitative properties of the first eigenfunctions to bypass the strong comparison principle. Establishing the Hadamard perturbation formula is challenging and highly depends on the structure of the differential operator involved. However, this formula for the derivative of $\lambda _{p,q}^s(\cdot)$ is established in~\cite[Proposition~1.1]{HarrelKroger} and in~\cite[Section 2{\&}3]{Kesavan} for $p=q=2$ and $s=1$; in~\cite[Theorem 1.3]{BobKol20} for $p\in (1, \infty )$, $q\in [1,p^*_s)$ and $s=1$; and in~\cite[Corollary 1.2]{Djitte-Fall-Weth2021} for $p=2$, $q=1,2$ and $s\in (0,1)$. To our knowledge, the Hadamard perturbation formula is not available for any $p, q$, and $s$ in their natural ranges.

Recently, Anoop-Ashok~\cite{Anoop-Ashok23} proved the strict monotonicity of the first eigenvalue of the $p$-Laplace operator for $\frac{2d+2}{d+2}<p<\infty $ via a rearrangement in $\mathbb{R}^d$ called~\emph{polarization}. This approach bypasses the usage of the Hadamard perturbation formula. One of the main tools in this approach is a strict Faber-Krahn type inequality involving polarization, which indicates that the first eigenvalue strictly decreases under polarization. The restriction on the exponent $p$ is required to get this strict Faber-Krahn type inequality by applying a general strong comparison principle due to Sciunzi~\cite[Theorem 1.4]{Sciunzi2014} for the $p$-Laplace operator. Another key idea is to express the translations of the inner ball as a rearrangement by polarization and then apply the strict Faber-Krahn inequality to get the strict monotonicity of the first eigenvalue. In this paper, we follow a similar approach to establish the strict monotonicity of $\lambda _{p,q}^s(\cdot )$ over $\mathcal{A}_{r, R}$ for any $p\in (1,\infty )$, $s\in (0,1),$ and $q \in [1, p_s^*)$. 

To state our results, we recall the notion of polarization for sets in $\mathbb{R}^d$, which was first introduced by Wolontis~\cite{Wolontis}. A \emph{polarizer} is an open affine-halfspace in $\mathbb{R}^d$. For a polarizer $H$ in $\mathbb{R}^d$, the polarization $P_H(\Omega )$ of $\Omega \subseteq \mathbb{R}^d$ is defined as
\begin{equation*}
    P_H(\Omega ) = \left[\left(\Omega \cup \sigma _H(\Omega )\right)\cap H \right] \cup \left[\Omega \cap \sigma _H(\Omega ) \right],
\end{equation*}
where $\sigma _H(\cdot )$ is the reflection in $\mathbb{R}^d$ with respect to the affine-hyperplane $\partial H$. It is easily verified that $P_H$ takes open sets to open sets, and $P_H$ is a rearrangement (i.e., it respects the set inclusion and preserves the measure) on $\mathbb{R}^d$. Now, we state our main results as a unified theorem.
\begin{theorem}\label{Thm-unified}
    Let $p \in (1, \infty),$ $s \in (0,1)$, and $q\in [1, p_s^*)$.
    \begin{enumerate}[label=\rm (\roman*)]
        \item  \textbf{Strict Faber-Krahn type inequality:} Let $H$ be a polarizer, and $\Omega $ be a bounded open set with $\mathcal{C}^{1,1}$-boundary in $\mathbb{R}^d$. Then $\la^s_{p,q}(\cdot )$ decreases under polarization. Further, let $\Omega \neq P_H(\Omega ) \neq \sigma _H(\Omega )$, then $\la^s_{p,q}(\cdot )$ strictly decreases under polarization.
        \item \textbf{Strict monotonicity:} Let $0 < r <R$. Then $\lambda _{p,q}^s\left(B_R(0)\setminus \overline{B}_r(t e_1)\right)$ is strictly decreasing for $0\leq t < R-r$. In particular, 
        \begin{equation*}
            \lambda ^s_{p,q}\left(B_R(0)\setminus \overline{B}_r(0)\right)=\max \limits _{0\leq t < R-r}\lambda ^s_{p,q}\left(B_R(0)\setminus \overline{B}_r(t e_1)\right).
        \end{equation*}
    \end{enumerate}
\end{theorem}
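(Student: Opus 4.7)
My strategy is to prove part~(i) via a polarisation rigidity argument, and then derive part~(ii) by realising the radial translation of the hole in $\mathcal{A}_{r,R}$ as a polarisation covered by~(i).

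For the non-strict direction of part~(i), let $u$ be a positive first $q$-eigenfunction on $\Omega$, whose existence and strict positivity follow from~\eqref{var-char-Ineq}, Proposition~\ref{regular}, and the strong maximum principle~\cite[Theorem~A.1]{BF2014}. I would polarise $u$ pointwise, setting $u^H(x) = u(x)\vee u(\sigma_H x)$ for $x\in H$ and $u^H(x) = u(x)\wedge u(\sigma_H x)$ for $x\in H^c$. Two by-now classical facts apply: polarisation is equimeasurable, giving $\|u^H\|_{L^q(\mathbb{R}^d)} = \|u\|_{L^q(\mathbb{R}^d)}$; and $u^H\in W^{s,p}_0(P_H(\Omega))$ with $[u^H]_{s,p}\le[u]_{s,p}$. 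The seminorm inequality follows by decomposing the Gagliardo integral over the four blocks $H\times H$, $H\times H^c$, $H^c\times H$, $H^c\times H^c$, exploiting the $\sigma_H$-invariance of the kernel $|x-y|^{-(d+sp)}$ to pair off-diagonal blocks, and applying the pointwise four-point inequality
\[
|a-b|^p + |c-d|^p \;\ge\; |a\vee c - b\vee d|^p + |a\wedge c - b\wedge d|^p,
\]
valid for all $a,b,c,d\in\mathbb{R}$ by convexity of $t\mapsto|t|^p$. Using $u^H$ as a competitor in~\eqref{var-char-Ineq} for $\lambda^s_{p,q}(P_H(\Omega))$ then yields the non-strict inequality.

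For the strict version I would argue by contradiction: if $\lambda^s_{p,q}(P_H(\Omega)) = \lambda^s_{p,q}(\Omega)$, then $u^H$ saturates the Rayleigh quotient on $P_H(\Omega)$ and is itself a positive first $q$-eigenfunction there (again via Proposition~\ref{regular} and the strong maximum principle). Equality forces the four-point inequality above to be saturated almost everywhere; strict convexity of $t\mapsto|t|^p$ for $p>1$ then rigidifies to the comonotonicity dichotomy that, on $H$, either $u\ge u\circ\sigma_H$ a.e.\ or $u\le u\circ\sigma_H$ a.e. Assuming without loss of generality the former and setting $w := u - u\circ\sigma_H \ge 0$, one subtracts the Euler-Lagrange equations of $u$ and of $u\circ\sigma_H$ to obtain a nonlocal comparison problem for $w$ on $H\cap P_H(\Omega)$. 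The strong comparison principle for $(-\Delta_p)^s$ from~\cite{BF2014} then forces $w$ to vanish identically on each connected component, whence the supports $\{u>0\}=\Omega$ and $\{u\circ\sigma_H>0\}=\sigma_H(\Omega)$ must align, giving $\Omega\in\{P_H(\Omega),\sigma_H(\Omega)\}$ and contradicting the hypothesis.

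For part~(ii), the plan is to deduce strict monotonicity from part~(i) by encoding, for each pair $0\le t_1<t_2<R-r$, the passage $\Omega_{t_1}\mapsto\Omega_{t_2}$ as a polarisation, up to an isometry of $\mathbb{R}^d$ under which $\lambda^s_{p,q}$ is invariant. Following the strategy of Anoop-Ashok~\cite{Anoop-Ashok23} in the local setting, I would select an affine half-space $H=H(t_1,t_2)$ so that $P_H(\Omega_{t_1})$ is congruent to $\Omega_{t_2}$ and so that $\Omega_{t_1}$, $P_H(\Omega_{t_1})$, $\sigma_H(\Omega_{t_1})$ are pairwise distinct; the rotational symmetry of $B_R(0)$ and the fact that $d\ge 2$ provide the geometric room to make such a choice. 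Part~(i) then gives
\[
\lambda^s_{p,q}(\Omega_{t_2}) \;=\; \lambda^s_{p,q}(P_H(\Omega_{t_1})) \;<\; \lambda^s_{p,q}(\Omega_{t_1}),
\]
which is the desired strict monotonicity.

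The \emph{main obstacle} is the rigidity step in part~(i)—passing from equality in the four-point convexity inequality to the set-theoretic dichotomy $\Omega\in\{P_H(\Omega),\sigma_H(\Omega)\}$. Two nonlocal features make this delicate: the kernel $|x-y|^{-(d+sp)}$ couples all pairs in $\mathbb{R}^d\times\mathbb{R}^d$, including pairs $(x,y)$ straddling $\partial H$ which have no local analogue, so the equality case must be analysed jointly on all four blocks; and for $p\neq 2$ the strong comparison principle for $(-\Delta_p)^s$ requires a definite sign on $w$ to begin with, which itself has to be extracted from the saturation of the convexity inequality. A secondary difficulty is the construction in part~(ii): the naive choice $\partial H=\{x_1=(t_1+t_2)/2\}$ does \emph{not} give $P_H(\Omega_{t_1})=\Omega_{t_2}$, since then $\sigma_H(B_R(0))\neq B_R(0)$, so realising the shift of the hole as a polarisation genuinely exploits the rotational freedom afforded by $d\ge 2$.
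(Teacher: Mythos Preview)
Your rigidity route for (i) via a comonotonicity dichotomy is a genuinely different idea from the paper's. The paper never analyses the equality case of the P\'olya--Szeg\H{o} inequality; instead it proves a bespoke strong comparison principle for the anti-symmetric function $P_Hu-u$ (Proposition~\ref{SCP}) and uses the sets $A_H(\Omega),B_H(\Omega)$ of Proposition~\ref{Propo-ball}-(v) to exhibit, inside $\Omega\cap H$, both a region where $P_Hu>u$ and one where $P_Hu\equiv u$, contradicting that principle. Your dichotomy is in fact valid (strictness of the pointwise seminorm inequality in the strictly anti-comonotone case forces one of $\{u>u\circ\sigma_H\}$, $\{u<u\circ\sigma_H\}$ to be null in $H$), and once you have it you need \emph{no} comparison principle at all: on $A_H(\Omega)=\sigma_H(\Omega)\cap\Omega^{\mathsf c}\cap H$ one has $u=0<u\circ\sigma_H$, ruling out the branch $u\ge u\circ\sigma_H$, while $B_H(\Omega)$ rules out the other. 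Your subsequent appeal to ``the strong comparison principle for $(-\Delta_p)^s$ from~\cite{BF2014}'' is therefore both superfluous and incorrect ---~\cite{BF2014} contains only a strong \emph{maximum} principle, and a general strong \emph{comparison} principle for $(-\Delta_p)^s$ with $p\neq 2$ is not available; this gap is exactly what the paper's Proposition~\ref{SCP} is built to bridge.

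Two further corrections. First, your block-by-block sketch of $[u^H]_{s,p}\le[u]_{s,p}$ is not right as written: on the off-diagonal blocks $H\times H^{\mathsf c}$ the four-point inequality goes the \emph{wrong} way, and the seminorm inequality only holds after all four terms are combined with the kernel weights $|x-y|^{-d-sp}\ge|\sigma_H(x)-y|^{-d-sp}$ (compare the case analysis in the proof of Proposition~\ref{norm properties}); this is also what makes the equality analysis produce the dichotomy. Second, your claimed obstacle in (ii) --- that $\partial H=\{x_1=(t_1+t_2)/2\}$ fails because $\sigma_H(B_R(0))\neq B_R(0)$ --- rests on a misconception about polarization. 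One does \emph{not} need $\sigma_H(\Omega)=\Omega$ to have $P_H(\Omega)=\Omega$; by Proposition~\ref{Propo-ball}-(iii) it suffices that $\sigma_H(\Omega)\cap H\subseteq\Omega$, and for $a\ge 0$ this holds for $B_R(0)$ (Proposition~\ref{Pol_Balls}-(i)). Thus the outer ball is polarization-invariant, the inner ball is reflected to $\overline{B}_r(t_2e_1)$, and Proposition~\ref{Propo-ball}-(iv) gives $P_{H_a}(\Omega_{t_1})=\Omega_{t_2}$ directly. No rotational device or use of $d\ge 2$ is needed for this step.
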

\begin{remark}
    (i) For $p=2$, $s\in (0,1)$ and $q=1, 2$, Theorem~\ref{Thm-unified}-(ii) provides an alternative proof for the strict monotonicity of $\lambda ^s_{2,q}\left(B_R(0)\setminus \overline{B}_r(t e_1)\right)$, for $0\leq t <R-r$, obtained by~\cite[Theorem~1.4]{Djitte-Fall-Weth2021}. Moreover, Theorem~\ref{Thm-unified}-(ii) extends their result for $q\in [1, 2^*_s)$.\\
    (ii) Our Theorem~\ref{Thm-unified}-(ii) is a nonlocal analog of the monotonicity results obtained by Bobkov-Kolonitsii~\cite{BobKol20} for the local problem $-\Delta _p u = f(u)$ with a particular non-linearity $f(u)=|u|^{q-2}u$, $q\in [1, p^*),$ where $p^*=\frac{dp}{d-p}$ when $d>p$ and $p^*=\infty $ when $d\leq p$.\\
    (iii) In view of~\eqref{relation} and Theorem~\ref{Thm-unified}-(ii), the fractional $p$-torsional rigidity $\mathscr{T}_{s,p}(\Omega _t)$ is strictly increasing for $0\leq t <R-r$.
\end{remark}

The strict Faber-Krahn type inequality gives the strict monotonicity of $\lambda _{p,q}^s\left(B_R(0)\setminus \overline{B}_r(t e_1)\right)$ for $0\leq t < R-r$, since we can express the translations of $\overline{B}_r(t e_1)$ in $B_R(0)$ as rearrangements by polarization (see Figure~\ref{fig:my_label1} and Proposition~\ref{Pol_Balls}-(iii)). We briefly describe our procedure to prove the strict Faber-Krahn type inequality in Theorem~\ref{Thm-unified}. The effect of polarization on the $L^q$-norm and the Gagliardo seminorm (see Proposition~\ref{norm properties}) proves the Faber-Krahn type inequality for $\lambda ^s_{p,q}(\cdot )$. To obtain the strict sign, we prove a version of the strong comparison principle involving a Sobolev function and its polarization (see Proposition~\ref{SCP}). When $\Omega \neq P_H(\Omega )\neq \sigma _H(\Omega )$ and equality holds in the Faber-Krahn type inequality, we prove a contradiction to the strong comparison principle in $\Omega \cap H$ with the help of two sets $A_H(\Omega )$ and $B_H(\Omega )$ (see Figure~\ref{fig:my_label} or Proposition~\ref{Propo-ball}-(v)).   

The remainder of the paper is organized as follows. In Section~\ref{Section 2}, we discuss some essential properties and P\'olya-Szeg\"o type inequality of polarization. In the same section, we also prove the regularity and strong comparison principle for the solutions of~\eqref{Eigen_eqn}. The proofs of our main results are given in Section~\ref{Section 3}. At the end of Section~\ref{Section 3}, we give some strict monotonicity results with respect to certain rotations and translations of a~\emph{`hole'} in general classes of domains.

\section{Polarization, regularity, and strong comparison principle}\label{Section 2}
In this section, we discuss polarization and some of its properties. Also, we prove some regularity results and a version of the strong comparison principle for the solutions of the fractional $p$-Laplace operator.
\subsection{Polarization and its properties} An open affine-halfspace in $\mathbb{R}^d$ is called a~\emph{polarizer}. The set of all polarizers in $\mathbb{R}^d$ is denoted by $\mathscr{H}$. We observe that, for any polarizer $H\in \mathscr{H}$ there exist $h\in \mathbb{S}^{d-1}$ and $a\in \mathbb{R}$ such that $H = \left\{x\in \mathbb{R}^d : x\cdot h <a \right\}.  $
For $H\in \mathscr{H}$, let $\sigma _H$ be the reflection in $\mathbb{R}^d$ with respect to $\partial H$. Then, for any $x\in \mathbb{R}^d$,
\begin{equation}\label{refl_formula}
    \sigma _H(x) = x -2(x\cdot h -a)h.
\end{equation}
The reflection of $A\subseteq \mathbb{R}^d$ with respect to $\partial H$ is $\sigma _H(A)=\{\sigma _H(x): x\in A\}$. It is straightforward to verify $\sigma _H(A^\mathsf{c})=\sigma _H(A)^\mathsf{c}$, $\sigma _H(A\cup B)=\sigma _H(A)\cup \sigma _H(B),$ and $\sigma _H(A\cap B)=\sigma _H(A)\cap \sigma _H(B)$  for any $A, B\subseteq \mathbb{R}^d$. Now, we define the polarization of sets and functions, see~\cite[Definition~1.1]{Anoop-Ashok23}.
\begin{definition}
Let $H\in \mathscr{H}$ and $\Omega \subseteq \mathbb{R}^d$. The polarization $P_H(\Omega )$ and the dual polarization $P^H(\Omega )$ of $\Omega $ with respect to $H$ are defined as
\begin{align*}
    P_H(\Omega ) &= \left[(\Omega \cup \sigma _H(\Omega ))\cap H\right] \cup [\Omega \cap \sigma _H(\Omega )],\\
    P^H(\Omega ) &= \left[(\Omega \cup \sigma _H(\Omega ))\cap H^\mathsf{c}\right] \cup [\Omega \cap \sigma _H(\Omega )].
\end{align*}
For $u:\mathbb{R}^d\longrightarrow \mathbb{R}$, the polarization $P_H(u):\mathbb{R}^d\longrightarrow \mathbb{R}$ with respect to $H$ is defined as
\begin{equation*}
    P_H u(x) = \left\{
    \begin{aligned}
        &\max\{u(x), u(\sigma _H(x))\}, \quad \text{ for } x \in H,\\
        &\min\{u(x), u(\sigma _H(x))\}, \quad \text{ for } x \in \mathbb{R}^d\setminus H.
    \end{aligned}
    \right.
\end{equation*}
For $u:\Omega \longrightarrow \mathbb{R}$, let $\widetilde{u}$ be the zero extension of $u$ to $\mathbb{R}^d$. The polarization $P_H u:P_H(\Omega ) \longrightarrow \mathbb{R}$ is defined as the restriction of $P_H \widetilde{u}$ to $P_H(\Omega )$.
\end{definition}
The polarization for functions on $\mathbb{R}^d$ is introduced by Ahlfors~\cite{Ahlfors73} (when $d=2$) and Baernstein-Taylor~\cite{Baernstein94} (when $d\geq 2)$. For further reading on polarizations and their applications, we refer the reader to~\cite{Anoop-Ash-Kesh, NirjanUjjalGhosh21, Brock04, Solynin12, Weth2010}. Now, we list some important properties of polarization.
\begin{figure}
    \centering
    \includegraphics[width=0.5\textwidth]{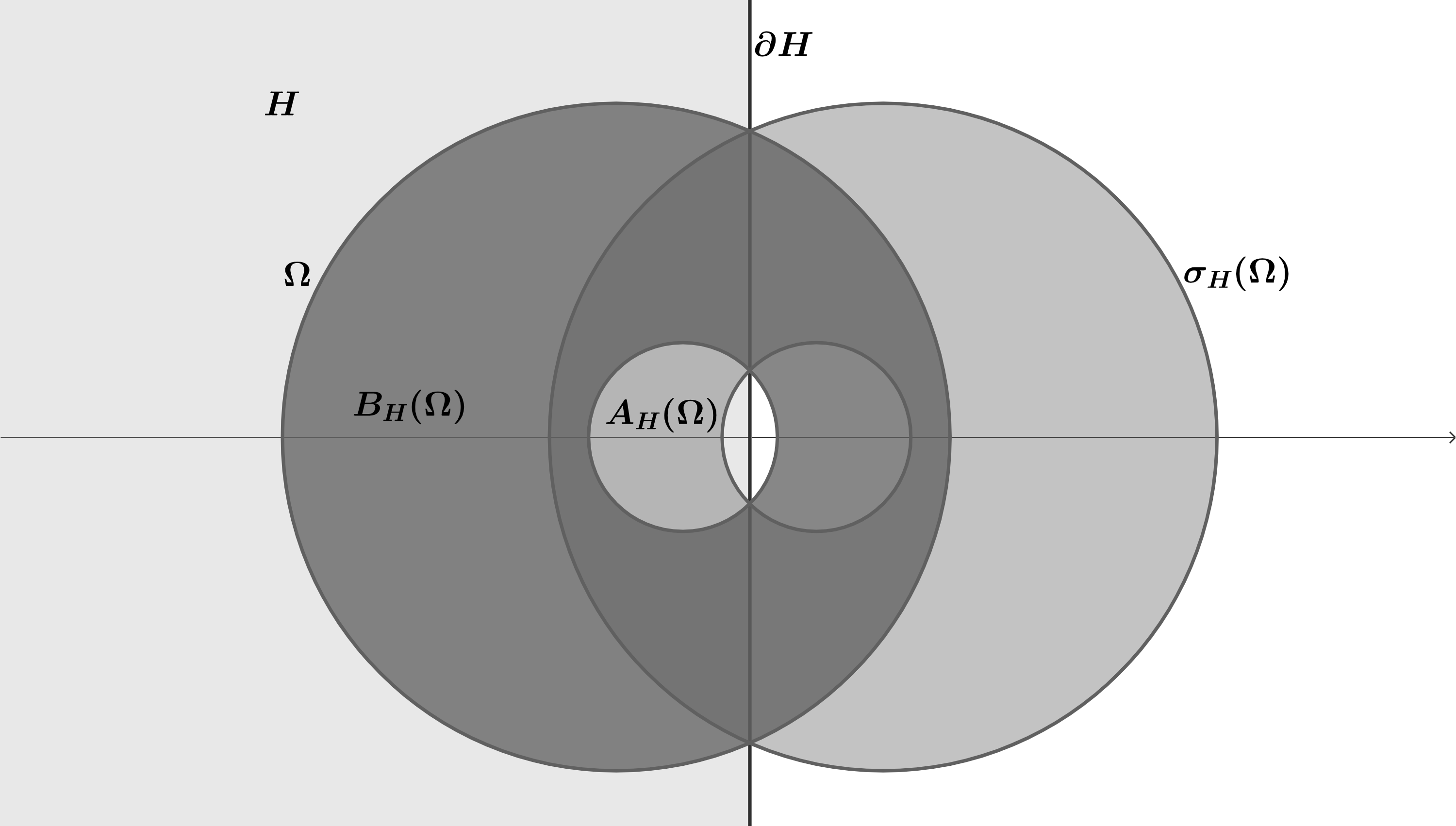}
    \caption{The sets $A_H(\Omega )$ and $B_H(\Omega )$ for an annular domain $\Omega$.}
    \label{fig:my_label}
\end{figure}
\begin{proposition}\label{Propo-ball}
    Let $H\in \mathscr{H}$, and $\Omega ,\mathscr{O} \subseteq \mathbb{R}^d.$ The following hold:
    \begin{enumerate}[label=\rm (\roman*)]
        \item $P_H(\sigma _H(\Omega ))=P_H(\Omega )$ and $P^H(\Omega ) = \sigma _H(P_H(\Omega ))$;
        \item $P_H(\Omega ^\mathsf{c})=(P^H(\Omega ))^\mathsf{c}$;
        \item $P_H(\Omega )=\Omega $ if and only if $\sigma _H(\Omega )\cap H\subseteq \Omega $;
        \item $P_H(\Omega \setminus \mathscr{O})=P_H(\Omega ) \setminus P^H(\mathscr{O}), \text{ if } \mathscr{O}, \sigma _H(\mathscr{O}) \subseteq \Omega $;
        \item if $\Omega $ is open and $\Omega \neq P_H(\Omega )\neq \sigma _H(\Omega )$, then the sets $A_H(\Omega ):= \sigma _H(\Omega ) \cap \Omega ^\mathsf{c} \cap H$ and $B_H(\Omega ):= \Omega \cap \sigma _H(\Omega ^\mathsf{c})\cap H $ have non-empty interiors.
    \end{enumerate}
\end{proposition}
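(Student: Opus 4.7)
The plan is to prove items (i)--(iv) by direct set-theoretic manipulations that rest on three basic facts about $\sigma_H$: it is an involution, it commutes with $\cup$, $\cap$, and complementation, and it swaps $H$ with $H^\mathsf{c}$. Item (v) will then be a topological consequence of (iii).

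For (i), I would substitute $\sigma_H(\Omega)$ into the definition of $P_H$ and use $\sigma_H\circ\sigma_H=\mathrm{id}$ to collapse the expression to $P_H(\Omega)$; for the second identity, apply $\sigma_H$ to the defining formula of $P_H(\Omega)$ and use $\sigma_H(H)=H^\mathsf{c}$ to recognize the output as the definition of $P^H(\Omega)$. Item (ii) follows by taking complements in the identity $P^H(\Omega)=\sigma_H(P_H(\Omega))$ obtained in (i), combined with the formula for $P_H(\Omega^\mathsf{c})$. For (iii), the forward direction is immediate: $\sigma_H(\Omega)\cap H\subseteq P_H(\Omega)=\Omega$. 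For the converse, apply $\sigma_H$ to the hypothesis to obtain $\Omega\cap H^\mathsf{c}\subseteq\sigma_H(\Omega)$; expanding $P_H(\Omega)$, the bracket $(\Omega\cup\sigma_H(\Omega))\cap H$ collapses to $\Omega\cap H$, while $\Omega\cap\sigma_H(\Omega)$ absorbs $\Omega\cap H^\mathsf{c}$, so the union equals $\Omega$. For (iv), partition an arbitrary point according to its membership in $H$ vs.\ $H^\mathsf{c}$ and in $\mathscr{O}$, $\sigma_H(\mathscr{O})$, using the inclusion $\mathscr{O}\cup\sigma_H(\mathscr{O})\subseteq\Omega$ to eliminate cross terms that would otherwise spoil the identity.

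For (v), (iii) immediately yields $A_H(\Omega)\neq\emptyset$: the assumption $\Omega\neq P_H(\Omega)$ gives $\sigma_H(\Omega)\cap H\not\subseteq\Omega$, which is exactly $A_H(\Omega)\neq\emptyset$. For $B_H(\Omega)$, use (i) to translate $P_H(\Omega)\neq\sigma_H(\Omega)$ into $P^H(\Omega)\neq\Omega$, apply the analogue of (iii) for $P^H$ to obtain $\sigma_H(\Omega)\cap H^\mathsf{c}\not\subseteq\Omega$, and reflect by $\sigma_H$ to conclude $\Omega\cap H\not\subseteq\sigma_H(\Omega)$, i.e.\ $B_H(\Omega)\neq\emptyset$. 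To upgrade non-emptiness to non-emptiness of the interior, set $V=\sigma_H(\Omega)\cap H$, which is open as the intersection of two open sets. The point is that for any open set $V$ in $\mathbb{R}^d$ and any open $\Omega$, the complement $V\setminus\Omega$ has non-empty interior precisely when $V\not\subseteq\overline{\Omega}$ (since $\partial\Omega$ has empty interior for open $\Omega$, one has $V\cap(\overline{\Omega})^\mathsf{c}\subseteq\mathrm{int}(V\setminus\Omega)$). Using the mild regularity $\Omega=\mathrm{int}(\overline{\Omega})$ (which is satisfied by the $\mathcal{C}^{1,1}$-domains of the main theorem, and can be taken as a standing hypothesis here), a containment $V\subseteq\overline{\Omega}$ would force $V\subseteq\mathrm{int}(\overline{\Omega})=\Omega$, contradicting $A_H(\Omega)\neq\emptyset$. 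A symmetric argument with $V$ replaced by the open set $\Omega\cap H$ and $\Omega$ replaced by $\sigma_H(\Omega)$ delivers the non-empty interior of $B_H(\Omega)$.

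The main obstacle is precisely this final step of (v): converting ``non-empty'' into ``non-empty interior'' is not a purely set-theoretic manoeuvre but requires the topological input $\Omega=\mathrm{int}(\overline{\Omega})$. Once this input is in place, the argument is short; without it, one can construct pathological open sets (with interior boundary points) where $A_H(\Omega)$ is non-empty yet everywhere thin. Items (i)--(iv) are routine calculations once the basic algebra of $\sigma_H$ is in hand, and should cause no difficulty.
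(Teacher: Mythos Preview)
Your treatment of (i)--(iv) matches the paper's: both rely on the same elementary identities for $\sigma_H$ and reduce everything to routine set algebra. For (v) the paper argues in one step: it identifies $\operatorname{int}(A_H(\Omega))=\sigma_H(\Omega)\cap\overline{\Omega}^{\mathsf{c}}\cap H$, and claims that if this is empty then $\sigma_H(\Omega)\cap H\subseteq\Omega$, whence $P_H(\Omega)=\Omega$ by (iii); it handles $B_H(\Omega)$ via the observation $B_H(\Omega)=A_H(\sigma_H(\Omega))$. Your two-stage route (non-emptiness from (iii), then an upgrade to non-empty interior) is organised differently but passes through the same bottleneck: the implication ``$\sigma_H(\Omega)\cap H\subseteq\overline{\Omega}\ \Rightarrow\ \sigma_H(\Omega)\cap H\subseteq\Omega$''.

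Your explicit invocation of $\Omega=\operatorname{int}(\overline{\Omega})$ at this point is well-placed. The paper writes ``since both $\Omega$ and $H$ are open'' for this step, but openness alone does not suffice: take $\Omega=(B_1(0)\setminus\{0\})\cup B_1(3e_1)\cup B_1(-3e_1)$ and $H=\{x_1<\tfrac32\}$; then $\Omega\neq P_H(\Omega)\neq\sigma_H(\Omega)$, yet $A_H(\Omega)=\{0\}$ has empty interior. So the proposition as stated needs the mild regularity you flag, and the paper's justification at this line is incomplete. Since every domain used downstream in the paper is $\mathcal{C}^{1,1}$ (hence satisfies $\Omega=\operatorname{int}(\overline{\Omega})$), your added hypothesis is harmless for the applications and makes the argument honest; the paper's more compact phrasing tacitly relies on the same assumption.
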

\begin{proof}
    The polarizations of $\Omega $ can be rewritten as 
    \begin{align*}
        P_H(\Omega ) & = \left[(\Omega \cup \sigma _H(\Omega ))\cap H\right]\cup \left[\Omega \cap \sigma _H(\Omega )\cap H^\mathsf{c} \right],\\
        P^H(\Omega ) & = \left[(\Omega \cup \sigma _H(\Omega ))\cap \overline{H}^\mathsf{c}\right]\cup \left[\Omega \cap \sigma _H(\Omega )\right].
    \end{align*}
    (i) This follows from the fact that $\sigma _H (H)=\overline{H}^\mathsf{c}\in \mathscr{H}$.\\
    (ii) We have $P_H(\Omega^\mathsf{c})=\left[(\Omega^\mathsf{c}\cup \sigma _H(\Omega^\mathsf{c}))\cap H\right]\cup \left[\Omega^\mathsf{c}\cap \sigma _H(\Omega^\mathsf{c})\right]$. Therefore
    \begin{align*}
        \left(P_H\left(\Omega^\mathsf{c}\right)\right)^\mathsf{c} &=\left[(\Omega\cap \sigma _H(\Omega))\cup H^\mathsf{c}\right]\cap \left[\Omega\cup \sigma _H(\Omega)\right]\\
        &= \left[\Omega\cap \sigma _H(\Omega)\right]\cup \left[(\Omega\cup \sigma _H(\Omega))\cap H^\mathsf{c}\right]=P^H(\Omega).
    \end{align*}
   \noindent (iii) Suppose $P_H(\Omega )=\Omega $. Then the inclusion $\sigma _H(\Omega )\cap H\subseteq \Omega $ holds by noticing that $P_H(\Omega )\cap H = (\Omega \cap H) \cup (\sigma _H(\Omega )\cap H) =\Omega \cap H$. Now, assume that $\sigma _H(\Omega )\cap H\subseteq \Omega $. By applying $\sigma _H$ and using the facts that $\Omega \cap \partial H=\sigma _H(\Omega )\cap \partial H$ and $H^\mathsf{c}=\sigma _H(H)\cup \partial H$, we arrive at $\Omega \cap H^\mathsf{c}\subseteq \sigma _H(\Omega )$. Hence, $P_H(\Omega )\cap H = (\Omega \cap H)\cup (\sigma _H(\Omega )\cap H)=\Omega \cap H$, and $P_H(\Omega )\cap H^\mathsf{c}=\Omega \cap \sigma _H(\Omega )\cap H^\mathsf{c}=\Omega \cap H^\mathsf{c}$. Therefore $P_H(\Omega )=\Omega $.\\
    (iv) First, we observe that $(\Omega \cap \mathscr{O}^\mathsf{c})\cap \sigma _H(\Omega \cap \mathscr{O}^\mathsf{c}) = (\Omega \cap \sigma _H(\Omega ))\cap (\mathscr{O}^\mathsf{c}\cap \sigma _H(\mathscr{O}^\mathsf{c})).$ Also $\Omega \cup \sigma _H(\mathscr{O}^\mathsf{c})=\mathbb{R}^d$, since $\sigma _H(\mathscr{O})\subset\Omega$. Using this fact, we get $(\Omega \cap \mathscr{O}^\mathsf{c})\cup \sigma _H(\Omega \cap \mathscr{O}^\mathsf{c}) = (\Omega \cup \sigma _H(\Omega ))\cap (\mathscr{O}^\mathsf{c}\cup \sigma _H(\mathscr{O}^\mathsf{c})).$ Therefore, $P_H(\Omega \cap \mathscr{O}^\mathsf{c})=[((\Omega \cap \mathscr{O}^\mathsf{c})\cup \sigma _H(\Omega \cap \mathscr{O}^\mathsf{c}))\cap H]\cup [(\Omega \cap \mathscr{O}^\mathsf{c})\cap \sigma _H(\Omega \cap \mathscr{O}^\mathsf{c})]= P_H(\Omega )\cap P_H(\mathscr{O}^\mathsf{c}).$\\
    (v) Firstly, observe that the interiors of $A_H(\Omega )$ and $B_H(\Omega )$ are $\sigma _H(\Omega )\cap \overline{\Omega }^\mathsf{c}\cap H$ and $\Omega \cap \sigma _H(\overline{\Omega }^\mathsf{c})\cap H$ respectively. If $\sigma _H(\Omega )\cap \overline{\Omega }^\mathsf{c}\cap H = \emptyset $ then $\sigma _H(\Omega )\cap H\subseteq \Omega $, since both $\Omega $ and $H$ are open. Hence $P_H(\Omega )=\Omega $, from (iv). Similarly, using the fact that $B_H(\Omega )= A_H(\sigma _H(\Omega )) $, if $B_H(\Omega )$ has an empty interior, then $P_H(\sigma _H(\Omega ))=\sigma _H(\Omega )$.  This contradicts the hypothesis. Therefore, $A_H(\Omega )$ and $B_H(\Omega )$ have non-empty interiors.  
\end{proof}
In the following proposition, we prove the invariance of $L^q$-norm and a fractional P\'olya-Szeg\"o type inequality under polarization. The fractional P\'olya-Szeg\"o inequality is first established by Beckner~\cite[page 4818]{Beckner92}. For completeness, we give a proof motivated from~\cite[Section~3]{DeFernSalort21}.
\begin{proposition}\label{norm properties}
    Let $H\in \mathscr{H}$, $\Omega \subseteq \mathbb{R}^d$ be open, and let $u:\Omega \longrightarrow \mathbb{R}^+$ be measurable. If $u\in L^p(\Omega )$ for some $p\in [1,\infty )$, then $P_H(u)\in L^p(P_H(\Omega ))$ with $\left\|P_H(u)\right\|_{L^p(P_H(\Omega ))} = \left\|u\right\|_{L^p(\Omega )}$. Furthermore, if $u \in W^{s,p}_0(\Omega )$ for some $s\in (0,1)$, then $P_H(u) \in W^{s,p}_0(P_H(\Omega ))$ with $\left[P_H(u)\right]_{s,p} \leq \left[u\right]_{s,p}$.
\end{proposition}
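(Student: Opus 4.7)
My plan is to prove the two claims separately, in both cases exploiting that polarization merely permutes the values of the zero extension $\tilde{u}$ at every reflected pair $\{x, \sigma_H x\}$. For the $L^p$ identity, this gives the pointwise equality $|P_H \tilde{u}(x)|^p + |P_H \tilde{u}(\sigma_H x)|^p = |\tilde{u}(x)|^p + |\tilde{u}(\sigma_H x)|^p$, and integrating over $H$ (using that $\sigma_H$ is an isometry) yields $\|P_H \tilde{u}\|_{L^p(\mathbb{R}^d)} = \|\tilde{u}\|_{L^p(\mathbb{R}^d)}$. A short case analysis using $u \ge 0$ then shows $P_H \tilde{u} \equiv 0$ on $\mathbb{R}^d \setminus P_H(\Omega)$: if $x \notin P_H(\Omega)$, then either both $x$ and $\sigma_H x$ lie outside $\Omega$, or exactly one of them lies in $\Omega$ and $x \in H^{\mathsf{c}}$, in which case $P_H \tilde{u}(x) = \min(\tilde{u}(x), \tilde{u}(\sigma_H x)) = 0$.

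For the Gagliardo seminorm, the plan is to split $\iint_{\mathbb{R}^d \times \mathbb{R}^d}$ along the product partition into $H \times H$, $H \times H^{\mathsf{c}}$, $H^{\mathsf{c}} \times H$, and $H^{\mathsf{c}} \times H^{\mathsf{c}}$, and to push all four pieces onto $H \times H$ by the change of variables $\sigma_H$. Writing $a = \tilde{u}(x)$, $a' = \tilde{u}(\sigma_H x)$, $b = \tilde{u}(y)$, $b' = \tilde{u}(\sigma_H y)$, and using $|x-y| = |\sigma_H x - \sigma_H y|$ together with $|x - \sigma_H y| = |\sigma_H x - y|$, one obtains
\begin{equation*}
[u]_{s,p}^p \;=\; \iint_{H \times H} \left[\, K_1 \bigl(|a-b|^p + |a'-b'|^p\bigr) + K_2 \bigl(|a-b'|^p + |a'-b|^p\bigr)\, \right] dx\, dy,
\end{equation*}
with $K_1 = |x-y|^{-(d+sp)}$ and $K_2 = |x - \sigma_H y|^{-(d+sp)}$. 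The analogous formula holds for $[P_H u]_{s,p}^p$ with $(a,a',b,b')$ replaced by $(A,A',B,B') := (\max(a,a'), \min(a,a'), \max(b,b'), \min(b,b'))$.

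The heart of the argument is then the pointwise inequality, for $x, y \in H$,
\begin{equation*}
K_1 \bigl(|A-B|^p + |A'-B'|^p\bigr) + K_2 \bigl(|A-B'|^p + |A'-B|^p\bigr) \;\le\; K_1 \bigl(|a-b|^p + |a'-b'|^p\bigr) + K_2 \bigl(|a-b'|^p + |a'-b|^p\bigr).
\end{equation*}
Two inputs drive it: (a) the geometric fact $K_1 \ge K_2$, which holds because $|x - \sigma_H y| \ge |x-y|$ whenever $x, y$ lie on the same side of $\partial H$; and (b) the permutation identity that the sum of all four $p$-th powers is invariant under polarization, since $\{A,A'\} = \{a,a'\}$ and $\{B,B'\} = \{b,b'\}$ as sets. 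Together, (a) and (b) reduce the claim to $|A-B|^p + |A'-B'|^p \le |a-b|^p + |a'-b'|^p$, which is the classical rearrangement inequality: for $\alpha \le \alpha'$ and $\beta \le \beta'$ one has $|\alpha-\beta|^p + |\alpha'-\beta'|^p \le |\alpha-\beta'|^p + |\alpha'-\beta|^p$, following from monotonicity of $t \mapsto \mathrm{sgn}(t)|t|^{p-1}$, i.e., the convexity of $|\cdot|^p$. I expect the main obstacle to be the bookkeeping in the four-quadrant splitting, ensuring that after identifying $(A,A')$ with the correct permutation of $(a,a')$ in each case, the favorable kernel $K_1$ always multiplies the ``same-side'' difference $|A-B|^p + |A'-B'|^p$ that polarization reduces, so that the two ingredients combine with the correct sign.
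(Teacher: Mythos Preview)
Your proposal is correct and follows essentially the same route as the paper: both split $\mathbb{R}^d \times \mathbb{R}^d$ into the four quadrants determined by $H$, push everything to $H \times H$ via $\sigma_H$, and reduce the pointwise comparison to the kernel inequality $|x-y| \le |x-\sigma_H y|$ for $x,y\in H$ together with the convexity-based rearrangement inequality (which the paper invokes as Chiti's lemma). Your permutation-invariance observation (b) packages what the paper does by an explicit four-case enumeration on the signs of $u(x)-u(\sigma_H x)$ and $u(y)-u(\sigma_H y)$ into the single line $(K_1-K_2)(S_1 - s_1) \le 0$, and your direct arguments for the $L^p$ identity and for $P_H\widetilde{u}=0$ on $\mathbb{R}^d\setminus P_H(\Omega)$ replace the paper's external citations, but the substance is the same.
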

\begin{proof}
    The inequality $\norm{P_H(u)}_{L^p(P_H(\Omega ))} =  \norm{u}_{L^p(\Omega )}$ follows from~\cite[Lemma~3.1-(i)]{Weth2010}. To prove the inequality $\left[P_H(u)\right]_{s,p} \leq \left[u\right]_{s,p}$, we split $\mathbb{R}^d \times \mathbb{R}^d$ as the union of the regions $H\times H$, $H^\mathsf{c}\times H^\mathsf{c}$, $H\times H^\mathsf{c}$, and $H^\mathsf{c}\times H$. Now, using the change of variables, we can rewrite $[u]_{s,p}^p$ as
    \begin{align*}
        \int _{\rd } \int _{\rd} \frac{|u(x)-u(y)|^p}{|x-y|^{d+sp}}   \dy \dx = \int _{H} \int _H & \left[\frac{|u(x)-u(y)|^p}{|x-y|^{d+sp}}\right.    + \frac{|u(\sigma_H(x))-u(y)|^p}{|\sigma _H(x)-y|^{d+sp}} \\
        & +  \frac{|u(x)-u(\sigma _H(y))|^p}{|x-\sigma _H(y)|^{d+sp}}    +\left. \frac{|u(\sigma _H(x))-u(\sigma _H(y))|^p}{|\sigma _H(x)-\sigma _H(y)|^{d+sp}} \right]  \dy \dx.
    \end{align*}
    A similar expression can be written for $v:=P_H(u)$. To prove the inequality $[P_H(u)]_{s,p}\leq [u]_{s,p}$, it is now enough to show
    \begin{multline}\label{eqn-PS1}
        \frac{|u(x)-u(y)|^p}{|x-y|^{d+sp}}+ \frac{|u(\sigma_H(x))-u(y)|^p}{|\sigma _H(x)-y|^{d+sp}} +  \frac{|u(x)-u(\sigma _H(y))|^p}{|x-\sigma _H(y)|^{d+sp}}+ \frac{|u(\sigma _H(x))-u(\sigma _H(y))|^p}{|\sigma _H(x)-\sigma _H(y)|^{d+sp}}\\
        \geq \frac{|v(x)-v(y)|^p}{|x-y|^{d+sp}}+ \frac{|v(\sigma_H(x))-v(y)|^p}{|\sigma _H(x)-y|^{d+sp}} +  \frac{|v(x)-v(\sigma _H(y))|^p}{|x-\sigma _H(y)|^{d+sp}}  + \frac{|v(\sigma _H(x))-v(\sigma _H(y))|^p}{|\sigma _H(x)-\sigma _H(y)|^{d+sp}},
    \end{multline}
    for all $x,y\in H$. Firstly, we notice that 
    \begin{equation}\label{eqn-Reflect1}
        |\sigma _H(x)-\sigma _H(y)|=|x-y|\leq |\sigma _H(x)-y|=|x-\sigma _H(y)|, ~~\text{ for } x,y\in H.
    \end{equation}
    We consider the following four different cases.\\
    \textbf{Case 1.} Let $x, y \in H$ be such that $u(x)\geq u(\sigma _H(x)), \text{ and } u(y)\geq u(\sigma _H(y)).$ Then, by the definition, $v(x)=u(x),$ $v(y)=u(y)$, $v(\sigma _H(x))=u(\sigma _H(x))$, and $v(\sigma _H(y))=u(\sigma _H(y))$. Therefore, \eqref {eqn-PS1} becomes equality.\\
    \textbf{Case 2.} Let $x, y \in H$ be such that $u(x)\leq u(\sigma _H(x)), \text{ and } u(y)\leq u(\sigma _H(y)).$ Then, by the definition, $v(x)=u(\sigma _H(x))$, $v(y)=u(\sigma _H(y))$, $v(\sigma _H(x))=u(x)$, and $v(\sigma _H(y))=u(y)$. Since the right-hand side of~\eqref{eqn-PS1} is invariant under $\sigma _H$, \eqref{eqn-PS1} becomes an equality in this case also.\\
    \textbf{Case 3.} Let $x, y \in H$ be such that $u(x)\leq u(\sigma _H(x)), \text{ and } u(y)\geq u(\sigma _H(y)).$ Then $v(x)=u(x)$, $v(\sigma _H(x))=u(\sigma _H(x))$, $v(y)= u(\sigma _H(y))$, and $v(\sigma _H(y))=u(y)$. Hence~\eqref{eqn-PS1} can be written as
    \begin{multline}
        \frac{|u(x)-u(y)|^p}{|x-y|^{d+sp}}+ \frac{|u(\sigma_H(x))-u(y)|^p}{|\sigma _H(x)-y|^{d+sp}} +  \frac{|u(x)-u(\sigma _H(y))|^p}{|x-\sigma _H(y)|^{d+sp}}+ \frac{|u(\sigma _H(x))-u(\sigma _H(y))|^p}{|\sigma _H(x)-\sigma _H(y)|^{d+sp}}\\
        \geq \frac{|u(x)-u(\sigma _H(y))|^p}{|x-y|^{d+sp}}+ \frac{|u(\sigma_H(x))-u(\sigma _H(y))|^p}{|\sigma _H(x)-y|^{d+sp}} + \frac{|u(x)-u(y)|^p}{|x-\sigma _H(y)|^{d+sp}}  + \frac{|u(\sigma _H(x))-u(y)|^p}{|\sigma _H(x)-\sigma _H(y)|^{d+sp}}.
    \end{multline}
    In view of~\eqref{eqn-Reflect1}, the above inequality holds provided
    \begin{equation}
       |u(x)-u(y)|^p-|u(\sigma _H(x))-u(y)|^p-|u(x)-u(\sigma _H(y))|^p+|u(\sigma _H(x))-u(\sigma _H(y))|^p \geq 0.
    \end{equation}
    This inequality is a direct consequence from~\cite[Lemma 1]{Chiti-Pucci79}.\\
    \textbf{Case 4.} Let $x, y \in H$ be such that $u(x)\geq u(\sigma _H(x)), \text{ and } u(y)\leq u(\sigma _H(y)).$ By interchanging the roles of $x$ and $y$, we arrive at Case 3; we get the inequality~\eqref{eqn-PS1}.\\
    Further, we have ${\rm supp}(P_H(u))\subseteq P_H(\supp{(u)})\subseteq P_H(\Omega )$ as $u$ is non-negative, see~\cite[Section~2]{Bob-Kol19} or~\cite[Proposition~2.14]{Anoop-Ashok23}. Hence, $P_H(u)=0$ in $\mathbb{R}^d\setminus P_H(\Omega )$. Therefore, we conclude that $P_H(u)\in W^{s,p}_0(P_H(\Omega )).$
\end{proof}
\subsection{Regularity results and strong comparison principle} 
In this subsection, we discuss the regularity of the $q$-eigenfunctions of~\eqref{Eigen_eqn} and the strong comparison principle for the fractional $p$-Laplace operator. For $q \ge p$, Franzina~\cite[Theorem 1.1]{Fr2019} showed that the minimizer of $\la_{p,q}^s$ defined over homogeneous fractional Sobolev space is bounded. In the following proposition, we provide an alternate proof of the global $L^{\infty}$-bound for the weak solutions of~\eqref{Eigen_eqn}.

\begin{proposition}\label{regular}
    Let  $p \in (1, \infty)$, $s \in (0,1)$, and $q \in [1,p^*_s)$. Let $\Omega \subset \R^d$ be a bounded open set, and $u \in \wps$ is a non-negative weak solution of~\eqref{Eigen_eqn}. Then the following hold: 
    \begin{enumerate}[label=\rm (\roman*)]
     \item $u \in L^{\infty}(\rd)$, i.e., $u$ is bounded on $\rd$.
     \item In addition, assume that $\Om$ is of class $\mathcal{C}^{1,1}$. Then $u \in \mathcal{C}^{0,\alpha }(\overline{\Omega })$ for some $\al \in (0, s]$. 
    \end{enumerate}
\end{proposition}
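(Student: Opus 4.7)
\noi\textit{Part (i).} The plan is to run a Moser-type iteration on the weak formulation of~\eqref{Eigen_eqn}. Since $u \geq 0$ and $u \in \wps$, I would test against $\varphi = u\,u_M^{\be p}$, where $u_M := \min\{u,M\}$ for $M>0$ and $\be\geq 0$; this is admissible in $\wps$ because $u_M$ is bounded and Lipschitz in $u$. Applying the standard algebraic inequality for the nonlocal $p$-form (in the spirit of Brasco--Parini--Squassina or Franzina--Palatucci) turns the resulting identity into a Gagliardo estimate of the shape
\begin{equation*}
    [u\,u_M^{\be}]_{s,p}^p \leq C(p,\be)\,\la \int_\Omega u^{q} u_M^{\be p}\,\dx.
\end{equation*}
Combining this with the fractional Sobolev embedding $\wps \hookrightarrow L^{p_s^*}(\Omega)$ gives
\begin{equation*}
    \|u\,u_M^{\be}\|_{L^{p_s^*}(\Omega)}^p \leq C(p,\be,\la) \int_\Omega u^{q} u_M^{\be p}\,\dx.
\end{equation*}
Because $q < p_s^*$, the Sobolev exponent $(\be+1)p_s^*$ strictly exceeds $\be p + q$, and this gap drives the iteration. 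A H\"older absorption on the right-hand side, followed by monotone convergence as $M\to\infty$, yields a recursion controlling $\|u\|_{L^{(\be+1)p_s^*/p}(\Omega)}$ by $\|u\|_{L^{\be p + q}(\Omega)}$. Starting from $u\in L^{p_s^*}(\Omega)$ (furnished by the Sobolev embedding) and iterating $\be$ along a geometric sequence while tracking the constants, one obtains the uniform bound $\|u\|_{L^\infty(\Omega)}<\infty$. Since $u\equiv 0$ on $\rd\setminus\Omega$, this gives $u\in L^\infty(\rd)$.

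\noi\textit{Part (ii).} Once (i) is established, the right-hand side $f := \la\,|u|^{q-2}u$ of~\eqref{Eigen_eqn} lies in $L^\infty(\Omega)$. The conclusion then follows by invoking the global H\"older regularity theorem for weak solutions of $(-\Delta_p)^s v = f$ on $\C^{1,1}$ domains with $f\in L^\infty(\Omega)$ and homogeneous nonlocal Dirichlet data (see, e.g., Iannizzotto--Mosconi--Squassina). This produces some $\al=\al(d,p,s,\Omega)\in(0,1)$ such that $u\in\C^{0,\al}(\overline{\Omega})$.

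\noi\textit{Main obstacle.} The delicate step is the Moser iteration in (i): one must verify that $\varphi = u\,u_M^{\be p}$ is genuinely admissible, that the algebraic inequality gives the correct power of $u\,u_M^{\be}$ on the left, and that the constants $C(p,\be)$ remain summable along the iteration. This last point is particularly sensitive when $q$ approaches $p_s^*$, since the H\"older exponents used in the absorption step tighten. Part (ii) is essentially an invocation of existing regularity theory and presents no real difficulty beyond identifying the appropriate reference.
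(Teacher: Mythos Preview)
Your proposal is correct and follows essentially the same route as the paper: a Moser-type iteration with truncated powers of $u$ as test functions, the algebraic inequality for the nonlocal $p$-form, and the fractional Sobolev embedding to drive the bootstrap, followed by the Iannizzotto--Mosconi--Squassina global H\"older regularity for part~(ii). The only cosmetic differences are that the paper tests with $\phi = u_M^{\sigma}$ rather than $u\,u_M^{\beta p}$, and for $q>p$ it kick-starts the iteration via an explicit level-set splitting (your ``H\"older absorption'') rather than leaving that step verbal.
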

\begin{proof}
    (i) First, we show that $u \in L^{\infty}(\rd)$. We only consider $d>sp$ and $q \in (p, p^*_s)$. The proof follows similar arguments for $d \le sp$ and $q \in (p, \infty)$. For $q<p$, using~\cite[Theorem~4.1]{NB-SK22}, we get $u \in L^{\infty}(\rd)$. Set $M \geq 0$ and $\sigma \geq 1$. Define $u_M=\min\{u,M\}$ and $\phi=u_M^{\sigma}$. Clearly, $u_M, \phi \in L^{\infty}(\Omega ) \cap \wps.$ Choosing $\phi$ as a test function in the weak formulation of $u$, we arrive at
    \begin{align}\label{eqn:weak}
        \int _{\rd} \int _{\rd} \frac{|u(x)-u(y)|^{p-2}(u(x)-u(y))}{\abs{x-y}^{d+sp}} (\phi(x)-\phi(y))\, \dy \dx = \lambda \int_{\Omega } u(x)^{q-1} \phi(x) \, \dx. 
    \end{align}
    We use the following inequality given in~\cite[Lemma C.2]{BrLiPa}: 
    \begin{equation*}
        |a-b|^{p-2}(a-b)(a_M^\sigma -b_M^\sigma) \geq \frac{\sigma p^p}{(\sigma +p-1)^p}\left|a_M^{\frac{\sigma +p-1}{p}}-b_M^{\frac{\sigma +p-1}{p}}\right|^p,
    \end{equation*}
    where $a, b\geq 0$, $a_M=\min\{a,M\}$, and $b_M=\min\{b,M\}$; and the embedding $\wps \hookrightarrow  L^{p^*_s}(\rd)$ to estimate the left-hand side of the identity~\eqref{eqn:weak} as follows:
    \begin{align*}
        \int _{\rd} \int _{\rd} &\frac{|u(x)-u(y)|^{p-2}(u(x)-u(y))}{\abs{x-y}^{d+sp}} (\phi(x)-\phi(y))\, \dy \dx \\
        & \geq \frac{\sigma p^p}{(\sigma + p-1)^p} \int _{\rd} \int _{\rd}  \frac{\left|u_M(x)^{\frac{\sigma+p-1}{p}}-u_M(y)^{\frac{\sigma + p - 1}{p}}\right|^{p}}{\abs{x-y}^{d+sp}}  \, \dx \\ 
        & \geq\frac{C(d,s,p)\sigma p^p}{(\sigma+p-1)^p} \left(\int_{\rd}\left(u_M(x)^{\frac{\sigma+p-1}{p}} \right)^{p^*_{s}}\,\dx\right)^{\frac{p}{p^*_s}}. 
    \end{align*}
    Since $M$ is arbitrary, the monotone convergence theorem yields
    \begin{align}\label{bound1}
        \frac{C(d,s,p)\sigma p^p}{(\sigma+p-1)^p} 
        \left(\int_{\rd}\left(u(x)^{\frac{\sigma+p-1}{p}} \right)^{p^*_{s}}\,\dx\right)^{\frac{p}{p^*_s}} \le \lambda \int_{\Om} u(x)^{\sigma + q-1} \, \dx.
    \end{align}
    \noi \textbf{Step 1.} This step shows that $u^{\sigma_1 + p -1} \in L^{\frac{p^*_{s}}{p}}(\rd)$ for $\sigma_1 := p^*_{s} - p +1$. For $\sigma = \sigma_1$, \eqref{bound1} takes the following form: 
    \begin{align}\label{bound2}
        \frac{C(d,s,p)\sigma p^p}{(p^*_{s})^p} \left(\int_{\rd}u(x)^{\frac{p^*_{s}}{p}p^*_{s}} \,\dx\right)^{\frac{p}{p^*_{s}}} \le \la \int_{\Om} u(x)^{\sigma_1 + q-1} \, \dx.
    \end{align}
    We consider $A:= \{ x \in \Omega : u(x) \le R \}$ and $A^c=\Om\setminus A$ for $R>1$ to be chosen. For $q>p$, we write $\sigma_1 + q -1 = p^*_{s} + q-p$. Applying the H\"{o}lder's inequality with the conjugate pair $\left(\frac{p^*_s}{p}, \frac{p^*_s}{p^*_s - p}\right)$, the right-hand side of~\eqref{bound2} can be estimated as: 
    \begin{align}\label{bound3}
        \int_{\Om} u(x)^{\sigma_1 + q-1} \, \dx & = \left( \int_{A} + \int_{A^c} \right) u(x)^{\sigma_1 + q-1} \, \dx \no \\ & \le R^{\sigma_1 + q-1} \abs{A} + \left( \int_{\Om} u(x)^{\frac{p^*_{s}}{p}p^*_{s}} \, \dx \right)^{\frac{p}{p^*_{s}}} \left( \int_{A^c} u(x)^{\frac{(q-p)p^*_s}{p^*_s-p}} \, \dx  \right)^{\frac{p^*_s-p}{p^*_{s}}}.
    \end{align}
    Observe that $\frac{(q-p)p^*_s}{p^*_s-p} < p^*_s$ since $q<p^*_s$. Then, using the H\"{o}lder's inequality with conjugate pair $\left( \frac{p^*_s-p}{q-p}, \frac{p^*_s-p}{p^*_s-q} \right)$,
    \begin{align}\label{bound4}
        \int_{A^c} u(x)^{\frac{(q-p)p^*_s}{p^*_s-p}} \, \dx \le \abs{A^c}^{\frac{p^*_s-q}{p^*_s-p}} \left( \int_{A^c} u(x)^{p^*_s} \, \dx \right)^{\frac{q-p}{p^*_s - p}}.
    \end{align}
    Now, we choose $R$ sufficiently large so that 
    \begin{align*}
        \frac{(p^*_{s})^p}{C(d,s,p)\sigma p^p} \la \abs{A^c}^{\frac{p^*_s-q}{p^*_s}}  \left( \int_{\Om} u(x)^{p^*_s} \, \dx \right)^{\frac{q-p}{p^*_s}} < \frac{1}{2}.
    \end{align*}
    Therefore, from~\eqref{bound2},~\eqref{bound3}, and~\eqref{bound4} we obtain
    \begin{align*}
        \frac{1}{2} \left(\int_{\rd}u(x)^{\frac{p^*_{s}}{p}p^*_{s}} \,\dx\right)^{\frac{p}{p^*_{s}}}  \le \frac{(p^*_{s})^p}{C(d,s,p)\sigma p^p} \la R^{\sigma_1 + q-1} \abs{A}.
    \end{align*}
    Thus $u^{\sigma_1 + p -1} \in L^{\frac{p^*_{s}}{p}}(\rd)$.  
    
    \noi \textbf{Step 2.} This step contains the $L^{\infty}$-bound for $u$. Set $a= \frac{\sigma+p^*_s-1}{\sigma+q-1}$. Clearly $a>1$, since $q<p^*_s$. Applying Young's inequality with the conjugate pair $(a, a')$,
    \begin{align*}
        u(x)^{\sigma + q-1} \le \frac{u(x)^{\sigma+p^*_s-1}}{a} + \frac{1}{a'} \le u(x)^{\sigma + p^*_s-1} + 1.
    \end{align*}
    Moreover,  $\sigma +p -1 \le \sigma p$. Hence, from~\eqref{bound1} there exists $C=C(\la, d,s,p, \Omega )$ such that 
    \begin{align*}
        \left(\int_{\rd}u(x)^{\frac{\sigma+p-1}{p}p^*_{s}} \,\dx\right)^{\frac{p}{p^*_{s}}} \le  C \left( \frac{\sigma + p -1}{p}\right)^{p-1} \left(1 + \int_{\Om} u(x)^{\sigma + p^*_s-1} \, \dx \right).
    \end{align*}
    From the above inequality, we get 
    \begin{align*}
        \left(1+\int_{\rd}u(x)^{\frac{\sigma+p-1}{p}p^*_{s}} \,\dx\right)^{\frac{p}{p^*_{s}}} &\le 1 + \left(\int_{\rd}u(x)^{\frac{\sigma+p-1}{p}p^*_{s}} \,\dx\right)^{\frac{p}{p^*_{s}}} \\
        &\le 1 + C\left( \frac{\sigma + p -1}{p}\right)^{p-1} \left(1 + \int_{\Om} u(x)^{\sigma + p^*_s-1} \, \dx \right) \\
        &\le \left( 1 + C\left( \sigma + p -1 \right)^{p-1} \right) \left(1 + \int_{\Om} u(x)^{\sigma + p^*_s-1} \, \dx \right) \\
        &\le \widetilde{C} \left( \sigma + p -1 \right)^{p-1} \left(1 + \int_{\Om} u(x)^{\sigma + p^*_s-1} \, \dx \right),
    \end{align*}
    where $\widetilde{C} = \widetilde{C}(\la, d,s,p, \Omega ) = C + (p-1)^{1-p}$.
    Set $\vartheta=\sigma+p-1$. Then, the above inequality yields 
    \begin{align}\label{form 1}
        \left(1+ \int_{\rd} u(x)^{\frac{\vartheta}{p}p^*_{s}}\,\dx\right)^{\frac{p}{p^*_{s}(\vartheta - p)}} \le \widetilde{C}^{\frac{1}{\vartheta - p}} \vartheta^{\frac{p-1}{\vartheta - p}} \left(1+ \int_{\rd} u(x)^{p^*_{s}+ \vartheta - p} \, \dx \right)^{\frac{1}{\vartheta-p}}.
    \end{align}
    Now, we consider the following sequences $(\vartheta_n)$ defined as:
    \begin{align*}
        \vartheta_1 = p^*_{s}, \vartheta_2 = p + \frac{p^*_{s}}{p}(\vartheta_1 -p), \cdot \cdot \cdot,  \vartheta_{n+1} = p + \frac{p^*_{s}}{p}(\vartheta_n -p).   
    \end{align*}
    Observe that $p^*_{s}-p+ \vartheta_{n+1} = \frac{p^*_{s}}{p} \vartheta_{n}$, and $\vartheta_{n+1} = p + \left(\frac{p^*_{s}}{p}\right)^n(\vartheta_1 - p)$. Since $p^*_{s} >p$, $\vartheta_{n} \ra \infty$, as $n \ra \infty$. Further, from~\eqref{form 1},  
    \begin{align}\label{form 2}
        \left(1+ \int_{\rd} u(x)^{\frac{\vartheta_{n+1}}{p}p^*_{s}}\,\dx\right)^{\frac{p}{p^*_{s}(\vartheta_{n+1} - p)}} \le \widetilde{C}^{\frac{1}{\vartheta_{n+1} - p}} \vartheta_{n+1}^{\frac{p-1}{\vartheta_{n+1} - p}} \left(1+ \int_{\rd} u(x)^{\frac{p^*_{s}}{p}\vartheta_{n}} \, \dx \right)^{\frac{p}{p^*_{s}(\vartheta_{n}-p)}}.
    \end{align}
    Set $D_n := \left(1+ \int_{\rd} u(x)^{\frac{p^*_{s}}{p}\vartheta_{n}} \, \dx \right)^{\frac{p}{p^*_{s}(\vartheta_{n}-p)}}.$ We iterate~\eqref{form 2} up to $(n+1)$-th step to get 
    \begin{align}\label{form 3}
        D_{n+1} \le \displaystyle \widetilde{C}^{\sum_{k=2}^{n+1} \frac{1}{\vartheta_k - p}} \left( \prod_{k=2}^{n+1} \vartheta_{k}^{\frac{1}{\vartheta_{k} - p}}  \right)^{p-1} D_1.
    \end{align}
    Now $D_1 = \left(1+ \int_{\rd} u(x)^{\frac{p^*_{s}}{p}p^*_{s}} \, \dx \right)^{\frac{p}{p^*_{s}(p^*_{s}-p)}} \le C$ (by Step 1). Hence, from~\eqref{form 3}, we have 
    \begin{equation}\label{form 5}
        \norm{u}_{L^{\frac{p^*_{s} \vartheta_{n+1}}{p}}(\rd)}^{\frac{\vartheta_{n+1}}{\vartheta_{n+1}-p}} \leq  \widetilde{C}^{\sum_{k=2}^{n+1} \frac{1}{\vartheta_k - p}} \left( \prod_{k=2}^{n+1} \vartheta_{k}^{\frac{1}{\vartheta_{k} - p}}  \right)^{p-1} C.
    \end{equation}
    Moreover, 
    \begin{align*}
        \sum_{k=2}^\infty\frac{1}{\vartheta_k-p} = \frac{d}{s p(p^*_{s} - p)} \text{ and } \prod_{k=2}^{\infty}\vartheta_{k}^{\frac{1}{\vartheta_k - p}} \le C(d,s,p).
    \end{align*}
    Therefore, taking the limit as $n \ra \infty$ in~\eqref{form 5}, we conclude $u \in L^{\infty}(\rd)$. 
    
    \noi (ii) Since $u \in L^{\infty}(\rd)$ and $\Omega $ is of class $\mathcal{C}^{1,1}$, we apply~\cite[Theorem 1.1]{IaMoSq} to get $u \in \C^{0,\al}(\overline{\Om})$ for some $\al \in (0, s]$.
\end{proof}
\begin{remark}
  For any non-negative weak solution $u \in \wps$ to~\eqref{Eigen_eqn}, Proposition~\ref{regular} infers that $u \in L^{\infty}(\rd) \cap \C(\rd)$. 
\end{remark}
For $p\in (1,\infty )$, $s\in (0,1)$, and an open set $\Omega \subseteq \mathbb{R}^d$ the Sobolev space $\widetilde{W} ^{s,p}(\Omega )$ is defined as
\begin{equation*}
    \widetilde{W} ^{s,p}(\Omega )=\left\{u\in L^p_{loc}(\mathbb{R}^d): \int _{\Omega }\int _{\mathbb{R}^d} \frac{|u(x)-u(y)|^p}{|x-y|^{d+sp}}\dy \dx<\infty \right\}.
\end{equation*}
We remark that $W^{s,p}_0(\Omega )\subseteq \widetilde{W} ^{s,p}(\Omega ).$ Consider the following fractional equation
\begin{equation}\label{eqn-frac_harmonic}
    (-\Delta _p)^s u_1 - (-\Delta _p)^s u_2 \geq 0.
\end{equation}
We say a pair $(u_1, u_2)$ with $u_1, u_2 \in \widetilde{W} ^{s,p}(\Omega )$ solves~\eqref{eqn-frac_harmonic} weakly in $\Omega $, if for any $\phi \in \mathcal{C}_c^\infty (\Omega )$ with $\phi \geq 0$ the following inequality holds
\begin{multline*}
    \int _{\mathbb{R}^d} \int _{\mathbb{R}^d} \frac{|u_1(x)-u_1(y)|^{p-2}(u_1(x)-u_1(y))}{|x-y|^{d+sp}} (\phi (x) -\phi (y)) \, \dy\dx \\
    -\int _{\mathbb{R}^d}\int _{\mathbb{R}^d} \frac{|u_2(x)-u_2(y)|^{p-2}(u_2(x)-u_2(y))}{|x-y|^{d+sp}} (\phi (x) -\phi (y)) \, \dy\dx  \geq 0.
\end{multline*}
Since $\mathcal{C}_c^\infty (\Omega ')\subseteq \mathcal{C}_c^\infty (\Omega )$ for any $\Omega '\subseteq \Omega $, notice that the pair $(u_1,u_2)$ solves~\eqref{eqn-frac_harmonic} weakly in $\Omega '$ also if $(u_1,u_2)$ solves~\eqref{eqn-frac_harmonic} weakly in $\Omega $. In the literature, some authors obtained a version of the strong maximum principle for fractional $p$-Laplace operator on the upper half-space involving antisymmetric functions. For example, by Jarohs-Weth~\cite[Proposition~3.6]{JarohsWeth2016} for $p=2$; and by Chen-Li~\cite[Theorem~2]{CHCo2018} for $p\neq 2$ assuming $\mathcal{C}_{loc}^{1,1}$-regularity on the functions involved. Motivated by their results, we prove a variant of the strong comparison principle involving the antisymmetric function $w:= P_H(u) -u$  without $\mathcal{C}_{loc}^{1,1}$-regularity on $u \in W^{s,p}_0(\Omega )$ for $H \in \mathscr{H}$. For any set $A \subseteq \rd$, $|A|$ denotes the measure of $A$.
\begin{proposition}[Strong Comparison Principle]\label{SCP}
    Let $H\in \mathscr{H}$, $\Omega \subset \mathbb{R}^d$ be an open set, $p\in (1, \infty )$, $s \in (0,1)$, and $u\in W^{s,p}_0(\Omega )$ be non-negative. Assume that $P_H u$ and $u$ satisfy the following equation weakly:
    \begin{equation}\label{SCP-eqn}
        (-\Delta _p)^s P_H u - (-\Delta _p)^s u \geq 0 \mbox{ in } \Omega \cap H.
    \end{equation}
    Consider the following sets in $\Omega \cap H$:
    \begin{align*}
      \mathcal{A} := \left\{ x \in  \Omega \cap H : P_H u(x) = u(x)  \right\}; \; \mathcal{B} := \left\{ x \in  \Omega \cap H : P_H u(x) > u(x)   \right\}.
    \end{align*}
    If $|\mathcal{B}|>0$, then $\mathcal{A}$ has an empty interior.
\end{proposition}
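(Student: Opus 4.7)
The plan is to reduce the weak inequality for $w := P_H u - u$ to an integral inequality over $H \times H$ by folding with the reflection $\sigma_H$, and then to exploit the strict kernel dominance $|x-y| < |x - \sigma_H(y)|$ for $x, y \in H$ together with the strict monotonicity of $J_p(t) := |t|^{p-2} t$ to force $w \equiv 0$ on $H$ whenever $|\mathcal{Z}| > 0$.

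After setting $v := P_H u$ and $w := v - u$, one first notes $w \geq 0$ in $H$, $w \leq 0$ in $H^c$, and $w \circ \sigma_H = -w$, while $v \in W^{s,p}_0(P_H(\Om))$ by Proposition~\ref{norm properties}. Starting from the weak formulation of the hypothesis,
\begin{equation*}
I(\phi) := \iint_{\R^d \times \R^d} \frac{F(x,y)}{|x-y|^{d+sp}} (\phi(x) - \phi(y)) \, dx \, dy \geq 0 \quad \forall \phi \in C_c^\infty(\Om \cap H),\ \phi \geq 0,
\end{equation*}
with $F(x,y) := J_p(v(x) - v(y)) - J_p(u(x) - u(y))$ antisymmetric in $(x,y)$, I would split $\R^d \times \R^d$ into the four quadrants along $H$, use that $\phi \equiv 0$ on $H^c$ and that the integrand is symmetric under the swap $(x,y) \mapsto (y,x)$, and apply the change of variables $y \mapsto \sigma_H(y)$ on the pieces meeting $H^c$ to recast the inequality equivalently as $\iint_{H \times H} \mathcal{K}(x, y) \phi(x) \, dx \, dy \geq 0$, where
\begin{equation*}
\mathcal{K}(x, y) := \frac{F(x, y)}{|x-y|^{d+sp}} + \frac{F(x, \sigma_H(y))}{|x - \sigma_H(y)|^{d+sp}}.
\end{equation*}
Varying $\phi$ then yields $M(x) := \int_H \mathcal{K}(x, y) \, dy \geq 0$ for a.e. $x \in \Om \cap H$.

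Next comes the key pointwise computation: for every $x \in \mathcal{Z}$ (so $v(x) = u(x)$) and every $y \in H$, a short case split on whether $w(y)$ is zero or positive, using $v(y) = \max\{u(y), u(\sigma_H(y))\}$ and $v(\sigma_H(y)) = \min\{u(y), u(\sigma_H(y))\}$, shows $\mathcal{K}(x, y) = 0$ when $w(y) = 0$, and, writing $\alpha := u(x) - u(y)$,
\begin{equation*}
\mathcal{K}(x, y) = \bigl[J_p(\alpha - w(y)) - J_p(\alpha)\bigr] \left( \frac{1}{|x-y|^{d+sp}} - \frac{1}{|x - \sigma_H(y)|^{d+sp}} \right)
\end{equation*}
when $w(y) > 0$. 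The first factor is strictly negative since $J_p$ is strictly increasing and $w(y) > 0$, while the second factor is strictly positive for $x, y \in H$ (interior) by the standard reflection inequality. Thus $\mathcal{K}(x, \cdot) \leq 0$ a.e.\ on $H$ for every $x \in \mathcal{Z}$, with strict inequality on $H \cap \{w > 0\}$. Assuming $|\mathcal{Z}| > 0$, I pick $x_0 \in \mathcal{Z}$ at which both $M(x_0) \geq 0$ and $\mathcal{K}(x_0, \cdot) \leq 0$ a.e.\ hold; these force $M(x_0) = 0$ and hence $\mathcal{K}(x_0, \cdot) = 0$ a.e.\ on $H$, so the strict negativity on $H \cap \{w > 0\}$ gives $|H \cap \{w > 0\}| = 0$, which combined with $w \geq 0$ on $H$ yields $w = 0$ a.e.\ in $H$, as required.

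The main obstacle is the rigorous folding in the reduction step: the double integral defining $I(\phi)$ is only conditionally convergent near the diagonal, so the interchange of the regional decomposition, the reflection $y \mapsto \sigma_H(y)$, and the reduction to the single expression $\iint_{H \times H} \mathcal{K}(x,y) \phi(x) \, dx \, dy$ must be justified carefully, for instance by an $\eps$-excision $|x-y| \geq \eps$ on which every rearrangement is absolutely convergent and then passing to the limit $\eps \to 0$. A secondary subtlety is ensuring that the pointwise case-analysis identity for $\mathcal{K}(x_0, \cdot)$ is applied along the same Borel representatives of $u$ and $v$ for which the a.e.\ conclusion $M \geq 0$ is available, so that $x_0 \in \mathcal{Z}$ can be chosen in the intersection of the two full-measure sets.
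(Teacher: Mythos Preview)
Your approach is correct and shares the core mechanism with the paper's proof---fold the weak inequality via $\sigma_H$ onto $H\times H$ and combine the strict kernel dominance $|x-y|<|x-\sigma_H(y)|$ with the strict monotonicity of $J_p$---but the organization is different. The paper tests with $\phi\in\mathcal{C}_c^\infty(\mathcal{Z})$, decomposes the folded integral as $I=I_1-I_2+I_3$, and uses the mean value theorem together with the identity $w\phi\equiv 0$ to show $I_3=0$ and $I_2\ge 0$, whence $I_1\ge 0$; from $I_1\ge 0$ and the sign of the kernel difference it reads off the a.e.\ conclusion. You instead test with $\phi\in\mathcal{C}_c^\infty(\Omega\cap H)$, symmetrize in one stroke to the single kernel $\mathcal{K}(x,y)$, and carry out a direct case split on $w(y)$ to see $\mathcal{K}(x_0,\cdot)\le 0$ for $x_0\in\mathcal{Z}$. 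Your route avoids the mean value theorem entirely and, because you vary $\phi$ over the open set $\Omega\cap H$ and only afterwards restrict attention to a point $x_0\in\mathcal{Z}$, it does not implicitly require $\mathcal{Z}$ to have nonempty interior---an assumption the paper needs in order to have nontrivial $\phi\in\mathcal{C}_c^\infty(\mathcal{Z})$ but does not state. The conditional-convergence issue you flag in the folding step is present in both arguments, and the $\varepsilon$-excision you propose is the standard remedy.
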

\begin{proof}
    Firstly, we denote $v=P_Hu$ in $\mathbb{R}^d$ and $G(t)=|t|^{p-2}t$ with $G'(t)=(p-1)|t|^{p-2}\geq 0$ for $t\in \mathbb{R}$. On the contrary, we assume that the interior of $\mathcal{A}$ is nonempty. We consider a test function $\phi \in \mathcal{C}_{c}^\infty (\mathcal{A})$ with $\phi > 0$ on $K := \text{supp}(\phi)$ where $|K|>0$. From~\eqref{SCP-eqn} we have 
    \begin{equation}\label{scp-1}
        I:= \int _{\rd}\int _{\rd} \frac{G(v(x)-v(y))-G(u(x)-u(y))}{\abs{x-y}^{d+sp}} (\phi(x)-\phi(y))\, \dy \dx \geq 0.
    \end{equation}
    For brevity, we denote the reflection point $\sigma _H(z)$ by $\overline{z}$ for a point $z \in \rd$ and $H \in \mathscr{H}$. Using $\mathbb{R}^d=H\cup H^\mathsf{c}$ and $\overline{y}=\sigma _H(y)$, we write the inner integral of~\eqref{scp-1} as an integral over $H$ as 
    \begin{align*}
        \int _{\mathbb{R}^d} &\frac{G(v(x)-v(y))-G(u(x)-u(y))}{\abs{x-y}^{d+sp}} (\phi(x)-\phi(y))\, \dy\\
        &= \left[\int _{H}+\int _{H^\mathsf{c}}\right] \frac{G(v(x)-v(y))-G(u(x)-u(y))}{\abs{x-y}^{d+sp}} (\phi(x)-\phi(y))\, \dy\\
        &= \int _{H} \frac{G(v(x)-v(y))-G(u(x)-u(y))}{\abs{x-y}^{d+sp}} \phi(x)\, \dy -\int _{H} \frac{G(v(x)-v(y))-G(u(x)-u(y))}{\abs{x-y}^{d+sp}}\phi(y)\, \dy\\
        &~~~+ \int _{H} \frac{G(v(x)-v(\overline{y}))-G(u(x)-u(\overline{y}))}{\abs{x-\overline{y}}^{d+sp}} (\phi(x)-\phi(\overline{y}))\, \dy\\
        &=\int _{H} \left(\frac{1}{\abs{x-y}^{d+sp}}-\frac{1}{\abs{x-\overline{y}}^{d+sp}}\right) \big( G(v(x)-v(y))-G(u(x)-u(y)) \big) \phi(x)\, \dy\\
        &~~~-\int _{H} \frac{G(v(x)-v(y))-G(u(x)-u(y))}{\abs{x-y}^{d+sp}}\phi(y)\, \dy\\
        &~~~+\int _{H} \frac{G(v(x)-v(y))-G(u(x)-u(y))+G(v(x)-v(\overline{y}))-G(u(x)-u(\overline{y}))}{\abs{x-\overline{y}}^{d+sp}} \phi(x)\, \dy,
    \end{align*}
    where we have used $\phi (\overline{y})=0$ for $y\in H$. We express the integral $I=I_1-I_2+I_3$, where $I_1, I_2,$ and $I_3$ are given by
    \begin{align*}
        I_1&= \int_{H} \int _{H } \left(\frac{1}{\abs{x-y}^{d+sp}}-\frac{1}{\abs{x-\overline{y}}^{d+sp}}\right) \big( G(v(x)-v(y))-G(u(x)-u(y)) \big) \phi(x)\, \dy \dx,\\
        I_2&= \int _{\rd} \int _{H } \frac{G(v(x)-v(y))-G(u(x)-u(y))}{\abs{x-y}^{d+sp}}\phi(y)\, \dy \dx,\\
        I_3&= \int _H \int _{ H } \frac{G(v(x)-v(y))-G(u(x)-u(y))+G(v(x)-v(\overline{y}))-G(u(x)-u(\overline{y}))}{\abs{x-\overline{y}}^{d+sp}} \phi(x)\, \dy \dx,
    \end{align*}
    where in the integrals $I_1$ and $I_3$ we again used the fact that the support of $\phi$ lies inside $H$. Now, we show that $I_3=0$ and $I_2\geq 0$.
    Using $\mathbb{R}^d=H\cup H^\mathsf{c}$,  $\overline{x}=\sigma _H(x)$, and $v(y)=u(y)$ for $y\in K={\rm supp} (\phi )$, we rewrite $I_2$ as 
    \begin{multline}\label{I_2 first}
        I_2=\int _{H} \int _{K} \frac{G(v(x)-u(y))-G(u(x)-u(y))}{\abs{x-y}^{d+sp}}\phi(y)\, \dy \dx \\
        + \int _{H} \int _{K} \frac{G(v(\overline{x})-u(y))-G(u(\overline{x})-u(y))}{\abs{\overline{x}-y}^{d+sp}}\phi(y)\, \dy \dx.
    \end{multline}
    By the definition, either $v(x)=u(x)$ or $v(x)=u(\overline{x})$ for $x\in \mathbb{R}^d$. Consider the sets $K_1=\{x\in H : v(x)=u(\overline{x})\}$ and $K_2=\{x\in H : v(x)=u(x)\}$. Notice that both the terms in~\eqref{I_2 first} involving $G(\cdot)$ are zero on the set $K_2$. Hence, from~\eqref{I_2 first}, we get
    \begin{align*}
        I_2 = \int _{K_1}\int _K \Bigl(G(v(x)-u(y))-G(u(x)-u(y))\Bigr)\left(\frac{1}{|x-y|^{d+sp}}-\frac{1}{|\overline{x}-y|^{d+sp}}\right)\phi(y) \, \dy\dx.
    \end{align*}
    Observe that $|x-y| < |\overline{x} - y|$ for any $x,y \in H$, and  $\phi \geq 0$ in $H$. Moreover, since $G(\cdot)$ is increasing and $v(x) \ge u(x)$ for $ x \in H$, we get $G(v(x)-u(y))-G(u(x)-u(y)) \geq 0$ for $x\in H$ and $y\in K$. Thus, we conclude that $I_2\geq 0.$ 
    Similarly, by replacing $v(x)=u(x)$ on $K$ we rewrite $I_3$ as 
    \begin{align*}
        I_3 = \int_{K} \int _{H} \frac{G(u(x)-v(y))-G(u(x)-u(y))+G(u(x)-v(\overline{y}))-G(u(x)-u(\overline{y}))}{\abs{x-\overline{y}}^{d+sp}} \phi(x)\, \dy \dx.
    \end{align*}
    Now, the term in the numerator involving $G(\cdot )$ is zero for $x\in K$ and $y \in H=K_1 \cup K_2$. Therefore, we get $I_3=0$. Thus,~\eqref{scp-1} implies that 
    \begin{align}\label{inequality 1}
        I_1= I+I_2 \geq 0.
    \end{align}
     On the other hand, for a.e. $x \in K$ and $y \in H$, since $v(x)-u(x)=0$ for $x \in K$, we get  
    \begin{align*}
        (v(x)-v(y))-(u(x)-u(y)) = u(y)-v(y) \left\{\begin{array}{ll} 
             < 0, & \text {when } y \in \mathcal{B}; \\ 
             \le 0, & \text{when } y \in H \setminus \mathcal{B}. \\
             \end{array} \right.
    \end{align*}
    By the monotonicity of $G(\cdot )$:  for a.e. $x \in K$ and $y \in H$,
    \begin{align*}
        G(v(x)-v(y)) - G(u(x)-u(y)) \left\{\begin{array}{ll} 
             < 0, & \text {when } y \in \mathcal{B}; \\ 
             \le 0, & \text{when } y \in H \setminus \mathcal{B}. \\
             \end{array} \right.
    \end{align*}
    Further, we have $\phi > 0$ on $K$ and $|x-y| < |x-\overline{y}|$ for any $x,y \in H$. Therefore, 
    \begin{align*}
        &I_1 = \int _{K} \int _{H} \left(\frac{1}{\abs{x-y}^{d+sp}}-\frac{1}{\abs{x-\overline{y}}^{d+sp}}\right) \big[G(v(x)-v(y))-G(u(x)-u(y)) \big] \phi(x)\, \dy \dx \\
        &= \left[ \;\int _{K} \int _{\mathcal{B}} + \int _{K} \int _{H \setminus \mathcal{B} }\right] \left(\frac{1}{\abs{x-y}^{d+sp}}-\frac{1}{\abs{x-\overline{y}}^{d+sp}}\right) \big[G(v(x)-v(y))-G(u(x)-u(y)) \big] \phi(x)\, \dy \dx \\
        &:= I_{1,1} + I_{1,2} < 0,
    \end{align*}
where the last strict inequality follows from the facts that $I_{1,1} < 0$ (since $|\mathcal{B}|>0$ and $\abs{K}>0$), and $I_{1,2} \le 0$. Thus, $I_1 < 0$, a contradiction to \eqref{inequality 1}. Therefore, the set $\mathcal{A}$ can not have a nonempty interior. This concludes the proof.
\end{proof}
\section{Proofs for the main results}\label{Section 3} 
This section establishes a strict Faber-Krahn type inequality under polarization for $\lambda _{p,q}^s$. Consequently, we obtain the strict monotonicity of $\lambda _{p,q}^s$ over annular domains. Afterward, we state a remark about the strict monotonicity of $\lambda _{p,q}^s$ over other classes of Lipschitz domains. 

\begin{theorem}\label{Thm-FKtype}
Let $p\in (1,\infty )$, $s\in (0,1)$, and $q \in [1, p^*_s)$. Let $\Omega \subset \mathbb{R}^d$ be a bounded domain of class $\mathcal{C}^{1,1}$, and $H\in \mathscr{H}$ be a polarizer. Then
\begin{equation}\label{FKtype}
    \la^s_{p,q}\left(P_H(\Omega )\right)\leq \la^s_{p,q}(\Omega ).
\end{equation}
Further, if $\Omega \neq P_H(\Omega ) \neq \sigma _H(\Omega )$ then
\begin{equation}\label{StrictFKtype}
    \la^s_{p,q}\left(P_H(\Omega )\right)< \la^s_{p,q}(\Omega ).
\end{equation}
\end{theorem}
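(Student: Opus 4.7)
The plan is to attack~\eqref{FKtype} directly via the variational characterization~\eqref{var-char-Ineq} and to upgrade to the strict form~\eqref{StrictFKtype} by combining the strong comparison principle in Proposition~\ref{SCP} with the geometric information in Proposition~\ref{Propo-ball}(v). A non-negative first $q$-eigenfunction $u$ on $\Om$ exists by direct methods, and by Proposition~\ref{regular} (using the $\mathcal{C}^{1,1}$-regularity of $\Om$) together with the strong maximum principle, $u$ is continuous on $\rd$ and strictly positive in $\Om$. Proposition~\ref{norm properties} then delivers $P_H u \in W^{s,p}_0(P_H(\Om))$ with $\norm{P_H u}_{L^q(P_H(\Om))} = \norm{u}_{L^q(\Om)} = 1$ and $[P_H u]_{s,p}^p \le [u]_{s,p}^p = \la^s_{p,q}(\Om)$; testing~\eqref{var-char-Ineq} for $P_H(\Om)$ against $P_H u$ yields~\eqref{FKtype}.

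For~\eqref{StrictFKtype} I would argue by contradiction: suppose $\la := \la^s_{p,q}(\Om) = \la^s_{p,q}(P_H(\Om))$. Equality then holds throughout the chain above, so $P_H u$ is itself a first $q$-eigenfunction on $P_H(\Om)$; by Proposition~\ref{regular} it is continuous on $\rd$, and both $u$ and $P_H u$ satisfy their respective Euler--Lagrange equations weakly. Because $\Om \cap H \subseteq P_H(\Om)$ and $P_H u \ge u \ge 0$ on $H$, subtracting the two eigenfunction equations yields
\begin{equation*}
    (-\De_p)^s P_H u - (-\De_p)^s u = \la\big((P_H u)^{q-1} - u^{q-1}\big) \ge 0 \quad \text{weakly in } \Om \cap H,
\end{equation*}
which is exactly the hypothesis of Proposition~\ref{SCP}.

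The next step is to certify that the zero set $\mathcal{Z} = \{x \in \Om \cap H : P_H u(x) = u(x)\}$ has positive measure, so that the SCP forces $P_H u \equiv u$ a.e.\ (and hence, by continuity, everywhere) in $H$. For this I would use $B_H(\Om) = \Om \cap \si_H(\Om^{\mathsf{c}}) \cap H$: points there lie in $\Om$ but are reflected into $\Om^{\mathsf{c}}$, so $u(\si_H(x)) = 0$ and $P_H u(x) = \max\{u(x), 0\} = u(x)$. By Proposition~\ref{Propo-ball}(v) this set has non-empty interior, so $|\mathcal{Z}| > 0$ as required, and Proposition~\ref{SCP} delivers $P_H u = u$ on $H$.

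Finally, the contradiction emerges on the twin set $A_H(\Om) = \si_H(\Om) \cap \Om^{\mathsf{c}} \cap H$, whose interior is also non-empty by Proposition~\ref{Propo-ball}(v). At any interior point $x$ one has $x \in H \cap \Om^{\mathsf{c}}$, hence $u(x) = 0$, while $\si_H(x) \in \Om$, so $u(\si_H(x)) > 0$ by the strong maximum principle; thus $P_H u(x) - u(x) = u(\si_H(x)) > 0$, contradicting $P_H u \equiv u$ on $H$. I expect the main technical care to lie around Proposition~\ref{SCP}: verifying that both eigenfunction equations hold in the weak sense required on $\Om \cap H$ (in particular, that $P_H u$ solves its Euler--Lagrange equation on the possibly larger set $P_H(\Om)$), and upgrading the a.e.\ identity $P_H u = u$ to a pointwise one using the continuity of $u$ on $\rd$ from Proposition~\ref{regular} and of $x \mapsto \max\{u(x), u(\si_H(x))\}$ on $H$.
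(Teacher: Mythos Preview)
Your proposal is correct and follows essentially the same strategy as the paper: use Proposition~\ref{norm properties} with the variational characterization for~\eqref{FKtype}, then assume equality, show $P_H u$ is a minimizer on $P_H(\Om)$, verify the hypothesis of Proposition~\ref{SCP} on $\Om\cap H$, use $B_H(\Om)$ to guarantee $|\mathcal{Z}|>0$, and reach a contradiction via $A_H(\Om)$. The only cosmetic difference is that the paper locates the final contradiction inside $\Om\cap H$ (on the set $B:=\mathcal{M}_u\cap\Om\cap H$) rather than on $A_H(\Om)\subset H\setminus\Om$; since the conclusion of Proposition~\ref{SCP} is $P_H u=u$ a.e.\ in all of $H$, your choice works just as well and is arguably cleaner.
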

\begin{proof}
Let $u\in W^{s,p}_0(\Omega )$ be a non-negative $q$-eigenfunction associated with $\lambda _{p,q}^s(\Omega )$. By Proposition~\ref{norm properties} and the variational characterization of $\la^s_{p,q}$, we get that $P_H(u) \in W^{s,p}_0(P_H(\Omega ))$ with $\left\|P_H(u)\right\|_{L^q(P_H(\Omega ))} = \left\|u\right\|_{L^q(\Omega )} =1$, and  
    \begin{equation}\label{ineq-Thm~3.1}
        \lambda ^s_{p,q}(P_H(\Omega )) \leq \left[P_H(u)\right]_{s,p} \leq \left[u\right]_{s,p} = \lambda ^s_{p,q}(\Omega ) .
    \end{equation}
Now, we prove the strict inequality~\eqref{StrictFKtype}. From Proposition~\ref{regular}, $u \in \mathcal{C}(\overline{\Om})$ and hence  $u, P_H(u) \in \mathcal{C}(\rd)$.  We consider the following sets
\begin{align*}
    \mathcal{M}_u = \Bigl\{x\in P_H(\Omega )\cap H : P_H u(x) > u(x)\Bigr\} \text{ and } \mathcal{N}_u = \Bigl\{x\in P_H(\Omega )\cap H : P_H u(x) = u(x)\Bigr\}.
\end{align*}
Observe that $ \mathcal{M}_u$ is open and $\mathcal{N}_u$ is relatively closed in $P_H(\Omega )\cap H$. First, we find a set $B$ in the open set $\Omega \cap H$ such that $B\cap \mathcal{M}_u \neq \emptyset .$ 
Since $\Omega \neq P_H(\Omega )\neq \sigma _H(\Omega )$, from Proposition~\ref{Propo-ball}-(v) both $A_H(\Omega )=\sigma _H(\Omega ) \cap \Omega ^\mathsf{c} \cap H$ and $B_H(\Omega )=\Omega  \cap \sigma _H(\Omega ^\mathsf{c}) \cap H$ have non-empty interiors. Further, $u>0$ in $\Om$. By the definition, we have $P_H u\geq u$ in $P_H(\Omega )\cap H$, and 
    \begin{equation}\label{eq-3.1}
        \begin{aligned}
        \text{in } A_H(\Omega ): &~~~ u=0, ~u\circ \sigma _H>0, \text{ and hence } P_H u=u\circ \sigma _H>u;\\
        \text{in } B_H(\Omega ): &~~~ u>0, ~u\circ \sigma _H=0, \text{ and hence } P_H u=u.
    \end{aligned}
    \end{equation}
    Therefore, using~\eqref{eq-3.1}, $P_H(\Omega )\cap H = \mathcal{M}_u \sqcup \mathcal{N}_u$ with $\mathcal{M}_u\supseteq A_H(\Omega )$, and $\mathcal{N}_u\supseteq B_H(\Omega )$, here $A_H(\Omega ),$ $B_H(\Omega )$ have non-empty interiors. Also, by the definition of  $A_H(\Omega)$, we get $P_H(\Omega )\cap H =(\Omega \cap H)\sqcup A_H(\Omega)$. Thus, we have 
    \begin{align*}
        P_H(\Omega )\cap H = \mathcal{M}_u \sqcup \mathcal{N}_u = (\Omega \cap H)\sqcup A_H(\Omega) \text{ with } \mathcal{M}_u\supseteq A_H(\Omega ).
    \end{align*}
    Therefore, since $\mathcal{N}_u$ is relative closed in $P_H(\Omega )\cap H$ and $\Omega \cap H$ is an open set,
    we get $\mathcal{N}_u \subsetneq \Omega \cap H$, and hence the set $B:=\mathcal{M}_u \cap \Omega \cap H$ is a non-empty open set. Therefore, the sets $\mathcal{N}_u, B \subset \Omega \cap H$ have the following properties:
    \begin{equation}\label{eq-ball}
        P_H u > u \text{ in } B 
        \text{ and }
        P_H u \equiv u \text{ in } \mathcal{N}_u.
    \end{equation}
    On the contrary to~\eqref{StrictFKtype}, assume that $\lambda _{p,q}^s(P_H(\Omega )) = \lambda _{p,q}^s(\Omega ) = \lambda .$ Now, the equality holds in~\eqref{ineq-Thm~3.1}, and hence $P_H(u)$ becomes a non-negative minimizer of the following problem:
    \begin{align*}
    \lambda ^s_{p,q}(P_H(\Omega )) = \min_{v \in W_0^{s,p}(P_H(\Omega ))} \left\{ \left[v\right]_{s,p}^p : \left\|v\right\|_{L^q(P_H(\Omega ))} =1 \right\}.  
    \end{align*}
    As a consequence, the following equation holds weakly:
    \begin{equation}\label{eqn_1}
    (-\Delta _p)^s P_H(u) = \lambda^s_{p,q} (P_H(\Omega)) |P_H(u)|^{q-2}P_H(u) \mbox{ in } P_H(\Omega) , \qquad P_H(u)=0 \mbox{ in } \mathbb{R}^d\setminus P_H(\Omega).
    \end{equation}
    Since $\Omega \cap H \subsetneq P_H(\Omega ) \cap H$, both $u$ and $P_H u$ are weak solutions to the equation:
    \begin{align*}
        (-\Delta _p)^s \widetilde{w} -\lambda |\widetilde{w}|^{q-2} \widetilde{w} =0 \text{ in } \Omega \cap H,
    \end{align*}
    where $\la = \lambda^s_{p,q} (P_H(\Omega))$. Using $P_H u \geq u$ in $\Omega \cap H$, we see that the following equation holds weakly:
    \begin{align*}
        (-\Delta _p)^s P_H(u) - (-\Delta _p)^s u = \lambda (|P_H(u)|^{q-2}P_H(u) - |u|^{q-2} u) \geq 0 \text{ in } \Omega \cap H.
    \end{align*}
    Now $\mathcal{N}_u \subseteq \mathcal{A}$ with $\mathcal{N}_u$ having a non-empty interior. Moreover, since $B \subseteq \mathcal{B}$ is open, we have $|\mathcal{B}|>0$. Therefore, applying Proposition~\ref{SCP}, we get a contradiction to the statement~\eqref{eq-ball}. Thus, the strict inequality~\eqref{StrictFKtype} holds.
\end{proof}
 
For $0<r<R<\infty $ and $0\leq t < R-r$, recall that the annular domain $\Omega _t \in \mathcal{A}_{r,R}$ is given by
\begin{equation}\label{annuli}
    \Omega _t = B_R(0)\setminus \overline{B}_r(t e_1),
\end{equation}
where $e_1=(1,0,\cdots ,0)\in \mathbb{R}^d$, $B_r(a)=\{x\in \mathbb{R}^d : |x-a|<r\}$ is the open ball centered at $z\in \mathbb{R}^d$ with radius $r\geq 0$, and $\overline{B}_r(a)$ is the closure of $B_r(a)$ in $\mathbb{R}^d$. We are interested in studying the monotonicity of $\lambda _{p,q}^s(\Omega _t)$ for $0\leq t <R-r$. For this, we consider the following family of polarizers 
\begin{equation}\label{Polarizers}
    H_a := \Bigl\{x=(x_1,x')\in \mathbb{R}\times \mathbb{R}^{d-1} : x_1 < a\Bigr\} \text{ for } a\in \mathbb{R}.
\end{equation}
For $a\in \mathbb{R},$ we denote the reflection with respect to $\partial H_a$ by $\sigma _a = \sigma _{H_a}$, and the polarizations in $\mathbb{R}^d$ with respect to $H_a$ by $P_a = P_{H_a}$ and $P^a = P^{H_a}$. Recall, from~\eqref{refl_formula}, that the reflection of a point $x=(x_1,x')\in \mathbb{R}^d$ is given by $\sigma _a(x)=(2a-x_1, x')$. In the following proposition, we demonstrate the effect of polarization on balls and annular domains.
\begin{figure}
    \centering
    \includegraphics[width=0.5\textwidth]{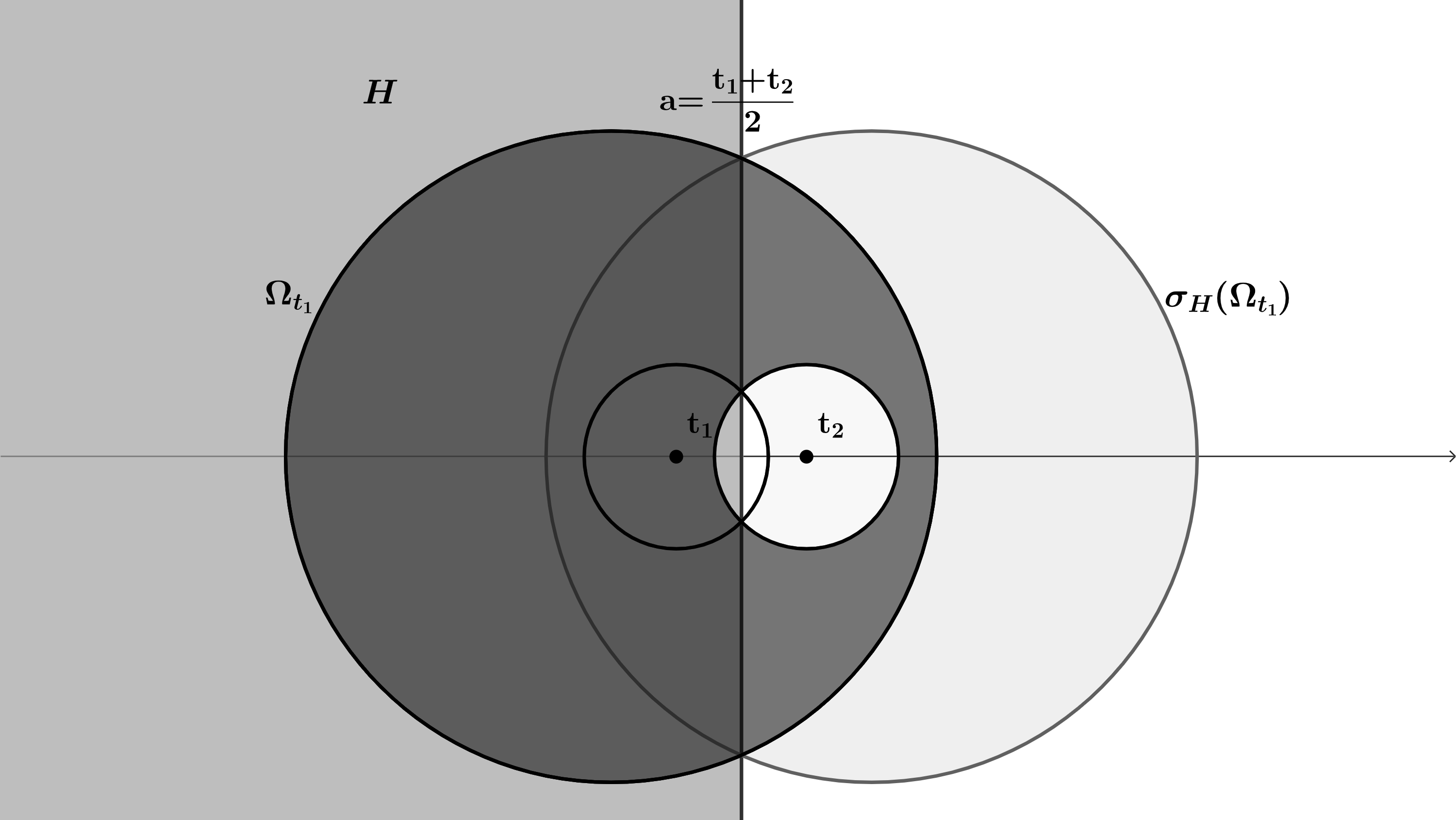}
    \caption{Polarization of $\Omega _{t_1}$ with respect to the polarizer $H_a=\{ x \in \rd : x_1 <a\}$. The dark-shaded region is $P_H(\Omega _{t_1})=\Omega _{t_2}$.}
    \label{fig:my_label1}
\end{figure}
\begin{proposition}\label{Pol_Balls}
    Let $0<r<R$, $0\leq t < R-r$, $a\in \mathbb{R}$, and $z=(z_1,z')\in \mathbb{R}^d$. Then:
    \begin{enumerate}[label=\rm (\roman*)]
        \item $P_a(B_r(z))=B_r(z)$ if $z_1\leq a$, and $P_a(B_r(z))=B_r(\sigma_a(z))$ if $z_1\geq a$;
        \item if $0\leq a <\frac{R-r+t}{2}$ then $\sigma _a (B_r(t e_1))\subset B_R(0)$;
        \item $P_a(\Omega _t)=\Omega _t$ for $0\leq a \leq t$; and $P_a(\Omega _t)=\Omega _{2a-t}$ for $t\leq a <\frac{R-r+t}{2}$.
    \end{enumerate}
\end{proposition}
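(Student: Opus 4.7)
The proposition is a direct consequence of the reflection formula~\eqref{refl_formula} combined with Proposition~\ref{Propo-ball}. The plan is to prove (i) first via Proposition~\ref{Propo-ball}-(iii), deduce (ii) by triangle inequality, and then derive (iii) from (i), (ii), and Proposition~\ref{Propo-ball}-(iv).

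For (i) with $z_1\leq a$, I would verify the hypothesis $\sigma_a(B_r(z))\cap H_a\subseteq B_r(z)$ of Proposition~\ref{Propo-ball}-(iii). The key identity is
\begin{equation*}
|\sigma_a(x)-z|^2-|x-z|^2 \;=\; 4(a-z_1)(a-x_1),
\end{equation*}
which is non-positive whenever $z_1\leq a$ and $x_1\geq a$, so any $x\in B_r(z)$ with $x_1>a$ satisfies $\sigma_a(x)\in B_r(z)$. For $z_1\geq a$, I would apply the first case to $B_r(\sigma_a(z))$ (whose center has first coordinate $\leq a$) and then use Proposition~\ref{Propo-ball}-(i) to get
\begin{equation*}
P_a(B_r(z))=P_a(\sigma_a(B_r(z)))=P_a(B_r(\sigma_a(z)))=B_r(\sigma_a(z)).
\end{equation*}

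Part (ii) is a direct triangle-inequality calculation: by~\eqref{refl_formula}, $\sigma_a(B_r(te_1))=B_r((2a-t)e_1)$, and the hypothesis $a<(R-r+t)/2$ gives $2a-t<R-r$, while $a\geq 0$ together with $t<R-r$ yields $t-2a\leq t<R-r$; hence $|2a-t|<R-r$ strictly, which even upgrades to the closed-ball inclusion $\overline{B_r((2a-t)e_1)}\subset B_R(0)$.

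For (iii), I write $\Omega_t=B_R(0)\setminus\overline{B_r(te_1)}$. The strict closed-ball inclusion from (ii) supplies the hypothesis of Proposition~\ref{Propo-ball}-(iv), so
\begin{equation*}
P_a(\Omega_t)=P_a(B_R(0))\setminus P^a(\overline{B_r(te_1)}).
\end{equation*}
Since $0\leq a$, part (i) applied to $B_R(0)$ gives $P_a(B_R(0))=B_R(0)$. For the closed inner ball I would note that (i) goes through verbatim with open balls replaced by closed balls (the reflection inequality above is insensitive to the open/closed distinction), and then combine with the identity $P^a=\sigma_a\circ P_a$ from Proposition~\ref{Propo-ball}-(i). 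This produces $P^a(\overline{B_r(te_1)})=\overline{B_r(te_1)}$ when $t\geq a$ and $P^a(\overline{B_r(te_1)})=\overline{B_r((2a-t)e_1)}$ when $t\leq a$, yielding the two claimed cases. The only bookkeeping subtlety is the closed-ball extension of (i) so that $P^a$ acts correctly on $\overline{B_r(te_1)}$; this is routine but unavoidable, since Proposition~\ref{Propo-ball}-(iv) is applied with the closed inner ball (complement of the open outer ball minus the closed hole).
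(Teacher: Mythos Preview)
Your proof is correct and follows essentially the same route as the paper: part~(i) via Proposition~\ref{Propo-ball}-(iii), part~(ii) by the triangle inequality, and part~(iii) via Proposition~\ref{Propo-ball}-(iv) combined with~(i) and~(ii). The only cosmetic differences are your use of the squared-norm identity $|\sigma_a(x)-z|^2-|x-z|^2=4(a-z_1)(a-x_1)$ in place of the paper's coordinate comparison, and your explicit flagging of the closed-ball extension needed for $P^a(\overline{B_r(te_1)})$, which the paper leaves implicit.
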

\begin{proof}
    Recall that $P_a(B_r(z))=\left[ \left(B_r(z) \cup B_r(\sigma _a(z))\right)\cap H_a \right]\cup \left[B_r(z) \cap B_r(\sigma _a(z))\right]$.\\
    (i) To prove $P_a(B_r(z))=B_r(z)$ for $z_1 \leq a$, from Proposition~\ref{Propo-ball}-(iv), it is enough to show $B_r(\sigma _a(z))\cap H_a \subseteq B_r(z)$. For $x\in B_r(\sigma _a(z))\cap H_a$, we have $|\sigma _a (z)-x|<r$ and $x_1<a$. Since $x_1< 2a -x_1$ and $z_1\leq 2a -z_1$, we get $|z_1-x_1|<|(2a-z_1)-x_1|$. Then $|z-x|\leq |\sigma _a(z)-x|<r$, and thus $B_r(\sigma _a(z))\cap H_a \subseteq B_r(z)$. For $z_1\geq a$, we consider $P^a$ instead of $P_a$ to get $P^a(B_r(z))=B_r(z)$. Therefore, $P_a(B_r(z))=\sigma _a(B_r(z))=B_r(\sigma _a(z)).$\\
    (ii) Let $x\in \sigma _a(B_r(t e_1))=B_r((2a-t)e_1).$ Since $0\leq a <\frac{R-r+t}{2}$, we get $-(R-r)<-t \leq 2a -t <R-r$. Now $|x|\leq |x-(2a-t)e_1|+|(2a-t)e_1|<r+R-r=R$. Hence $x\in B_R(0).$\\
    (iii) Let $0\leq a <\frac{R-r+t}{2}$. From~(ii) we have $\sigma _a (B_r(t e_1))\subset B_R(0)$. Now, using Proposition~\ref{Propo-ball}-(v),
    \begin{equation}
        P_a(\Omega _t)= P_a\left(B_R(0)\setminus \overline{B}_r(t e_1)\right)=P_a\left(B_R(0)\right) \setminus P^a\left(\overline{B}_r(t e_1)\right). 
    \end{equation}
    From~(i), $P_a(B_R(0))=B_R(0)$, $P^a\left(\overline{B}_r(t e_1)\right)=\overline{B}_r(t e_1)$ if $a\leq t$; and $P^a\left(\overline{B}_r(t e_1)\right)=\overline{B}_r((2a-t)e_1)$ if $a\geq t$. Therefore $P_a(\Omega _t)=\Omega _t$ for $0\leq a \leq t$, and $P_a(\Omega _t)=B_R(0)\setminus \overline{B}_r((2a-t)e_1)=\Omega _{2a-t}$ for $t\leq a <\frac{R-r+t}{2}$.
\end{proof}

\begin{theorem}\label{thm-1.3} Let $p\in (1,\infty )$, $s \in (0,1)$, and $q\in [1, p_s^*)$. Then $\lambda _{p,q}^s(\Omega _{t_2}) < \lambda _{p,q}^s(\Omega _{t_1})$ for any $0\leq t_1 < t_2 < R-r$.
\end{theorem}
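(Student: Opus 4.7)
The plan is to deduce this strict monotonicity directly from the strict Faber--Krahn type inequality of Theorem~\ref{Thm-FKtype} combined with the polarization identity for annuli in Proposition~\ref{Pol_Balls}(iii). The strategy is to realize $\Omega_{t_2}$ as the polarization of $\Omega_{t_1}$ with respect to a suitably chosen affine half-space, and then check that the hypotheses of the strict inequality~\eqref{StrictFKtype} are met.

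Concretely, given $0 \leq t_1 < t_2 < R-r$, I would set $a := \frac{t_1 + t_2}{2}$ and consider the polarizer $H_a = \{x\in\mathbb{R}^d : x_1 < a\}$ defined in~\eqref{Polarizers}. The two inequalities $t_1 \leq a < \frac{R-r+t_1}{2}$ translate exactly into $t_1 \leq t_2$ and $t_2 < R-r$, both of which hold by assumption. Proposition~\ref{Pol_Balls}(iii) then gives $P_a(\Omega_{t_1}) = \Omega_{2a - t_1} = \Omega_{t_2}$. Since $\Omega_{t_1}$ is a bounded domain with $\mathcal{C}^{1,1}$-boundary (in fact smooth), Theorem~\ref{Thm-FKtype} is applicable, and the weak inequality~\eqref{FKtype} already yields $\lambda^s_{p,q}(\Omega_{t_2}) \leq \lambda^s_{p,q}(\Omega_{t_1})$.

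To upgrade to strict inequality, I must verify that $\Omega_{t_1} \neq P_a(\Omega_{t_1}) \neq \sigma_a(\Omega_{t_1})$. The first non-equality follows because $P_a(\Omega_{t_1}) = \Omega_{t_2}$ and $t_1 \neq t_2$, so the inner holes are centered at different points. For the second, since $t_1 < t_2$ forces $a > 0$, the reflection $\sigma_a(\Omega_{t_1})$ has outer ball $B_R(2a e_1)$, whereas $P_a(\Omega_{t_1}) = \Omega_{t_2}$ has outer ball $B_R(0)$; these are distinct because $2a > 0$. Hence the hypothesis of~\eqref{StrictFKtype} is satisfied, and Theorem~\ref{Thm-FKtype} yields $\lambda^s_{p,q}(\Omega_{t_2}) = \lambda^s_{p,q}(P_a(\Omega_{t_1})) < \lambda^s_{p,q}(\Omega_{t_1})$, completing the proof.

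There is essentially no obstacle beyond this bookkeeping; the entire argument is an application of the two preceding results. The only step requiring even minor attention is the verification that the two non-degeneracy conditions $\Omega_{t_1} \neq P_a(\Omega_{t_1})$ and $P_a(\Omega_{t_1}) \neq \sigma_a(\Omega_{t_1})$ hold, which as noted above reduce to observing that the inner balls and the outer balls, respectively, sit at different centers.
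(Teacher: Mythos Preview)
Your proposal is correct and follows essentially the same approach as the paper: set $a=\tfrac{t_1+t_2}{2}$, invoke Proposition~\ref{Pol_Balls}(iii) to get $P_a(\Omega_{t_1})=\Omega_{t_2}$, verify $\Omega_{t_1}\neq\Omega_{t_2}\neq\sigma_a(\Omega_{t_1})$, and apply the strict Faber--Krahn inequality~\eqref{StrictFKtype}. You have simply spelled out in more detail the range check on $a$ and the two non-degeneracy conditions that the paper leaves implicit.
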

\begin{proof}
    Applying Proposition~\ref{Pol_Balls}-(iii) with $a=\frac{t_1+t_2}{2}$, we get $P_a(\Omega _{t_1})=\Omega _{t_2}$. Since $\Omega _{t_1}\neq \Omega _{t_2}\neq \sigma _a(\Omega _{t_1})$, from~\eqref{StrictFKtype} of Theorem~\ref{Thm-FKtype} we obtain that $\lambda _{p,q}^s(\Omega _{t_2}) = \lambda _{p,q}^s(P_a(\Omega _{t_1}))< \lambda _{p,q}^s(\Omega _{t_1})$.
\end{proof}

\subsection*{Some monotonicity results for general domains}
Our Theorem~\ref{Thm-FKtype} can be applied to a broader range of $\mathcal{C}^{1,1}$-smooth domains, as discussed in~\cite{Anoop-Ashok23}. Specifically, it can be used for domains of the form $\Omega \setminus \mathscr{O} \subset \mathbb{R}^d$, where $\mathscr{O}\subset \subset \Omega $ is a \emph{`hole.'} This application yields the monotonicity of $\lambda ^s_{p,q}(\Omega \setminus \mathscr{O})$ for certain rotations and translations of $\mathscr{O}$ within $\Omega $. In particular, for the class of domains that are the difference of Steiner symmetric or foliated Schwarz symmetric sets in $\mathbb{R}^d$. Examples of Steiner symmetric planar domains are regular $n$-gons and ellipses. Foliated Schwarz symmetric domains in $\mathbb{R}^2$ include rhombuses, disks, eccentric annular domains, and cones.

\noi \textbf{(a) Translations in a given direction.} We recall the following characterization of Steiner symmetric sets in $\mathbb{R}^d$ in terms of polarization, see for example~\cite[Lemma~2.2]{Bob-Kol19} or~\cite[Proposition~2.10-(i)]{Anoop-Ashok23}: \emph{A set $\Omega \subseteq \mathbb{R}^d$ is Steiner symmetric with respect to an affine-hyperplane $\partial H_{a_0}$, $a_0\in \mathbb{R}$ if and only if $P_a(\Omega )=\Omega $ for any $a\geq a_0$, and $P^a(\Omega )=\Omega $ for any $a\leq a_0$.} Assume that both $\Omega , \mathscr{O}$ are Steiner symmetric with respect to $\partial H_{0}$. The translations of $\mathscr{O}$ in the $e_1$-direction are given by $\mathscr{O}_t:= t e_1+ \mathscr{O}$ for $t\in \mathbb{R}$. Consider the set $L_\mathscr{O}$ of translations of $\mathscr{O}$ in the $e_1$-direction within $\Omega $ as 
\begin{equation*}
    L_\mathscr{O}:=\bigl\{t \geq 0 : \mathscr{O}_t\subset \Omega \bigr\}.
\end{equation*}
Then, from~\cite[Lemma~4.1 and proof of Theorem~1.5]{Anoop-Ashok23}, for $t_1<t_2$ in $L_\mathscr{O}$ we have $P^a(\mathscr{O}_{t_1})=\mathscr{O}_{t_2}$ with $2a =t_1+t_2$, in particular, $P_a\left(\Omega \setminus \mathscr{O}_{t_1}\right)=\Omega \setminus \mathscr{O}_{t_2}$; and the set $L_\mathscr{O}$ is an interval $[0, R_\mathscr{O})$, where $R_\mathscr{O}=\sup L_\mathscr{O}$. Now, the proofs Theorem~\ref{thm-1.3} and~\cite[Theorem~1.5]{Anoop-Ashok23} give the monotonicity of the eigenvalue $\lambda ^s_{p,q}(\Omega \setminus \mathscr{O}_t)$ for $0\leq t <R_\mathscr{O}$, and it is stated as the following theorem.
\begin{theorem}
    Let $\mathscr{O}, \Omega \subset \mathbb{R}^d$ be two sets such that $\Omega \setminus \mathscr{O}$ is $\mathcal{C}^{1,1}$-smooth bounded domain. Let $p\in (1,\infty )$, $s \in (0,1)$, and $q\in [1, p_s^*)$. If both $\mathscr{O}$ and $\Omega $ are Steiner symmetric with respect to $\partial H_0$, then $L_\mathscr{O}=[0, R_\mathscr{O})$ an interval, and $\lambda ^s_{p,q}(\Omega \setminus \mathscr{O}_t)$ is strictly decreasing for $0\leq t <R_\mathscr{O}$. 
\end{theorem}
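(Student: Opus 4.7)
The plan is to mirror the proof of Theorem~\ref{thm-1.3}, replacing the balls in Proposition~\ref{Pol_Balls} with the general polarization--translation identity for Steiner symmetric sets. The engine remains the strict Faber-Krahn inequality~\eqref{StrictFKtype} of Theorem~\ref{Thm-FKtype}, applied with the polarizers $H_a$ from~\eqref{Polarizers}.

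The first step is to record two facts from~\cite[Lemma~4.1]{Anoop-Ashok23}: that $L_{\mathscr{O}} = [0, R_{\mathscr{O}})$ with $R_{\mathscr{O}} = \sup L_{\mathscr{O}}$, and that whenever $0 \leq t_1 < t_2$ lie in $L_{\mathscr{O}}$ and $2a = t_1 + t_2$, one has $P^a(\mathscr{O}_{t_1}) = \mathscr{O}_{t_2}$. The underlying idea is that Steiner symmetry of $\mathscr{O}$ about $\partial H_0$ forces its slices perpendicular to $e_1$ to be symmetric intervals; a direct computation with the reflection formula~\eqref{refl_formula} then gives both $\sigma_a(\mathscr{O}_{t_1}) = \mathscr{O}_{t_2}$ and the inclusion $\mathscr{O}_{t_2} \cap H_a \subseteq \mathscr{O}_{t_1}$, which together yield the polarization identity. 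The interval property uses the same slice structure combined with Steiner symmetry of $\Omega$.

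With these in hand, for $0 \leq t_1 < t_2 < R_{\mathscr{O}}$ I would set $a = (t_1 + t_2)/2 > 0$. Since $\Omega$ is Steiner symmetric about $\partial H_0$ and $a \geq 0$, the polarization characterization recalled before the statement gives $P_a(\Omega) = \Omega$. As $\mathscr{O}_{t_1}, \sigma_a(\mathscr{O}_{t_1}) = \mathscr{O}_{t_2} \subset \Omega$, Proposition~\ref{Propo-ball}-(iv) applies and yields $P_a(\Omega \setminus \mathscr{O}_{t_1}) = P_a(\Omega) \setminus P^a(\mathscr{O}_{t_1}) = \Omega \setminus \mathscr{O}_{t_2}$. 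This establishes the first non-triviality required by Theorem~\ref{Thm-FKtype}, since $\mathscr{O}_{t_1} \neq \mathscr{O}_{t_2}$.

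The main obstacle is verifying the second non-triviality condition $P_a(\Omega \setminus \mathscr{O}_{t_1}) \neq \sigma_a(\Omega \setminus \mathscr{O}_{t_1})$. Rewriting $\sigma_a(\Omega \setminus \mathscr{O}_{t_1}) = \sigma_a(\Omega) \setminus \mathscr{O}_{t_2}$, by Proposition~\ref{Propo-ball}-(v) it suffices to show $B_{H_a}(\Omega \setminus \mathscr{O}_{t_1})$ has non-empty interior; a short computation using the identity $\mathscr{O}_{t_2} \cap H_a \subseteq \mathscr{O}_{t_1}$ reduces this to producing an open subset of $\Omega \cap \sigma_a(\Omega)^{\mathsf{c}} \cap \mathscr{O}_{t_1}^{\mathsf{c}} \cap H_a$, which is guaranteed because $\Omega$ is bounded and Steiner symmetric about $\partial H_0$ (so $\sigma_a(\Omega) \neq \Omega$ for $a > 0$, with enough of $\Omega$ in $H_a$ lying outside both $\sigma_a(\Omega)$ and the hole). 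Applying~\eqref{StrictFKtype} then delivers $\lambda_{p,q}^s(\Omega \setminus \mathscr{O}_{t_2}) < \lambda_{p,q}^s(\Omega \setminus \mathscr{O}_{t_1})$, completing the proof.
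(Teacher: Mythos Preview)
Your proposal is correct and follows the same route as the paper: cite \cite[Lemma~4.1]{Anoop-Ashok23} for the interval structure of $L_{\mathscr{O}}$ and the identity $P^a(\mathscr{O}_{t_1})=\mathscr{O}_{t_2}$, combine with $P_a(\Omega)=\Omega$ and Proposition~\ref{Propo-ball}-(iv) to obtain $P_a(\Omega\setminus\mathscr{O}_{t_1})=\Omega\setminus\mathscr{O}_{t_2}$, and then invoke the strict Faber--Krahn inequality of Theorem~\ref{Thm-FKtype}. The paper compresses all of this into the paragraph preceding the theorem and simply points to the proof of Theorem~\ref{thm-1.3}; you have written out the details it omits, in particular the verification of the non-triviality hypotheses $\Omega\setminus\mathscr{O}_{t_1}\neq P_a(\Omega\setminus\mathscr{O}_{t_1})\neq\sigma_a(\Omega\setminus\mathscr{O}_{t_1})$.

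One small comment: your treatment of the second non-triviality via the interior of $B_{H_a}(\Omega\setminus\mathscr{O}_{t_1})$ is valid but heavier than necessary. Since $\sigma_a(\Omega\setminus\mathscr{O}_{t_1})=\sigma_a(\Omega)\setminus\mathscr{O}_{t_2}$ and both $\Omega$ and $\sigma_a(\Omega)$ contain $\overline{\mathscr{O}_{t_2}}$ (the latter because $\overline{\mathscr{O}_{t_1}}\subset\Omega$), the condition $P_a(\Omega\setminus\mathscr{O}_{t_1})\neq\sigma_a(\Omega\setminus\mathscr{O}_{t_1})$ reduces immediately to $\Omega\neq\sigma_a(\Omega)$, which you already establish from boundedness and Steiner symmetry about $\partial H_0$. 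Also note that Proposition~\ref{Propo-ball}-(v) as stated gives only the forward implication; the direction you use is the contrapositive of the intermediate step in its proof, so it would be cleaner to cite that directly or bypass it as above.
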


\noindent \textbf{(b) Rotations about a given point.} We recall a characterization of foliated Schwarz symmetric sets in $\mathbb{R}^d$ in terms of polarizations, see for example~\cite[Proposition~2.10-(ii)]{Anoop-Ashok23}: \emph{A set $\Omega \subseteq \mathbb{R}^d$ is foliated Schwarz symmetric with respect to a ray $a+\mathbb{R}^+ \eta $, for some $a\in \mathbb{R}^d$ and $\eta \in\mathbb{S}^{d-1}$, if and only if $P_H(A)=A$ for any $H\in \mathscr{H}_{a, \eta }:=\left\{H\in\mathscr{H}: a+\mathbb{R}^+\eta \subset H \text{ and } a \in \partial H \right\}$.} Assume that both $\Omega , \mathscr{O}$ are foliated Schwarz symmetric with respect to the ray $a+\mathbb{R}^+\eta $, $a\in \mathbb{R}^d$, $\eta \in \mathbb{S}^{d-1}$. From~\cite[Proposition~2.9]{Anoop-Ashok23}, both $\mathscr{O}$ and $\Omega $ are axial-symmetric with respect to the straight line $a+\mathbb{R}\eta $. The rotations of $\mathscr{O}$ about the point $a\in \mathbb{R}^d$ are given by 
\begin{equation*}
    \mathscr{O}_{\theta , \xi }:= a+ R_{\theta , \xi }(-a+\mathscr{O}) \text{ for } \theta \in [0,\pi ],
\end{equation*}
where $\xi \in \mathbb{S}^{d-1}\setminus \{\eta \}$, and $R_{\theta , \xi }$ is the simple rotation in $\mathbb{R}^d$ with the plane of rotation $X_{\xi }:={\rm span}\left\{\eta , \xi \right\}$ and the angle of rotation $\theta $ from the ray $\mathbb{R}^+\eta $ in the counter-clockwise direction. By the axial-symmetry of both $\mathscr{O}$ and $\Omega $, and~\cite[Lemma~4.3]{Anoop-Ashok23}, it is enough to consider the rotations of $\mathscr{O}$ in $\Omega $ by $R_{\theta ,\xi }$ for a fixed $\xi \in \mathbb{S}^{d-1}\setminus \{\eta \}$. Therefore, we set $\mathscr{O}_{\theta }=\mathscr{O}_{\theta ,\xi }$ for $\theta \in [0,\pi ]$. Consider the set of rotations of $\mathscr{O}$ about the point $a\in \mathbb{R}^d$ within $\Omega $ as
\begin{equation*}
    C_{\mathscr{O}} := \bigl\{ \theta \in [0, \pi] : \mathscr{O}_\theta \subset \Omega \bigr\}.
\end{equation*}
Then, from~\cite[Lemma~4.4 and proof of Theorem~1.8]{Anoop-Ashok23}, for any $\theta _1< \theta _2$ in $C_\mathscr{O}$ there exists a polarization $H\in \mathscr{H}_{a, \eta }$ such that $P^H(\mathscr{O}_{\theta _2})=\mathscr{O}_{\theta _1}$, in particular $P_H\left(\Omega \setminus \mathscr{O}_{\theta _2}\right)=\Omega \setminus \mathscr{O}_{\theta _1}$; and the set $C_\mathscr{O}$ is an interval. Now, the proofs Theorem~\ref{thm-1.3} and~\cite[Theorem~1.8]{Anoop-Ashok23} give the monotonicity of  $\lambda ^s_{p,q}(\Omega \setminus \mathscr{O}_\theta )$ for $\theta \in C_\mathscr{O}$, and it is stated as the following theorem.
\begin{theorem}
    Let $\mathscr{O}, \Omega \subset \mathbb{R}^d$ be two sets such that $\Omega \setminus \mathscr{O}$ is $\mathcal{C}^{1,1}$-smooth bounded domain. Let $p\in (1,\infty )$, $s \in (0,1)$, and $q\in [1, p_s^*)$. If both $\mathscr{O}$ and $\Omega $ are foliated Schwarz symmetric with respect to the ray $a+\mathbb{R}^+\eta $ for some $a\in \mathbb{R}^d$ and $\eta \in \mathbb{S}^{d-1}$, then the set $C_\mathscr{O}$ is an interval, and  $\lambda ^s_{p,q}(\Omega \setminus \mathscr{O}_\theta )$ is strictly increasing for $\theta \in C_\mathscr{O}$.
\end{theorem}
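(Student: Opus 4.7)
The strategy mirrors that of Theorem~\ref{thm-1.3}: realize the rotation $\mathscr{O}_{\theta_2}\mapsto\mathscr{O}_{\theta_1}$ as a dual polarization, translate this to a polarization of the annular domain $\Omega\setminus\mathscr{O}_{\theta_2}$, and then apply the strict Faber-Krahn inequality of Theorem~\ref{Thm-FKtype}. Since the fact that $C_{\mathscr{O}}$ is an interval is already cited from \cite[Lemma~4.4]{Anoop-Ashok23}, only the strict monotonicity needs proof.

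Fix $\theta_1<\theta_2$ in $C_{\mathscr{O}}$. By \cite[Lemma~4.4 and proof of Theorem~1.8]{Anoop-Ashok23}, there exists a polarizer $H\in\mathscr{H}_{a,\eta}$ with $P^H(\mathscr{O}_{\theta_2})=\mathscr{O}_{\theta_1}$. The characterization of foliated Schwarz symmetry of $\Omega$ via polarization gives $P_H(\Omega)=\Omega$, equivalently $\sigma_H(\Omega)\cap H\subseteq\Omega$ by Proposition~\ref{Propo-ball}-(iii). Combining this with $\mathscr{O}_{\theta_2}\subset\Omega$ and the inclusion $\sigma_H(\mathscr{O}_{\theta_2})\cap H^{\mathsf{c}}\subseteq\mathscr{O}_{\theta_1}\subset\Omega$ (read off from the $H^{\mathsf{c}}$-part of the identity $P^H(\mathscr{O}_{\theta_2})=\mathscr{O}_{\theta_1}$) yields $\sigma_H(\mathscr{O}_{\theta_2})\subset\Omega$. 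Hence Proposition~\ref{Propo-ball}-(iv) applies and
\begin{equation*}
P_H(\Omega\setminus\mathscr{O}_{\theta_2})=P_H(\Omega)\setminus P^H(\mathscr{O}_{\theta_2})=\Omega\setminus\mathscr{O}_{\theta_1}.
\end{equation*}

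To invoke the strict form of Theorem~\ref{Thm-FKtype}, I would verify $\Omega\setminus\mathscr{O}_{\theta_2}\neq P_H(\Omega\setminus\mathscr{O}_{\theta_2})\neq\sigma_H(\Omega\setminus\mathscr{O}_{\theta_2})$. The first non-equality is immediate from $\mathscr{O}_{\theta_1}\neq\mathscr{O}_{\theta_2}$. For the second, using the identity $P^H=\sigma_H\circ P_H$ from Proposition~\ref{Propo-ball}-(i), the equality $P_H(\Omega\setminus\mathscr{O}_{\theta_2})=\sigma_H(\Omega\setminus\mathscr{O}_{\theta_2})$ is equivalent to $P^H(\Omega\setminus\mathscr{O}_{\theta_2})=\Omega\setminus\mathscr{O}_{\theta_2}$, which is ruled out by $P^H(\mathscr{O}_{\theta_2})=\mathscr{O}_{\theta_1}\neq\mathscr{O}_{\theta_2}$ together with the dual version of Proposition~\ref{Propo-ball}-(iv). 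Thus Theorem~\ref{Thm-FKtype} delivers
\begin{equation*}
\lambda^s_{p,q}(\Omega\setminus\mathscr{O}_{\theta_1})=\lambda^s_{p,q}\bigl(P_H(\Omega\setminus\mathscr{O}_{\theta_2})\bigr)<\lambda^s_{p,q}(\Omega\setminus\mathscr{O}_{\theta_2}),
\end{equation*}
which is precisely the asserted strict monotonicity. The main obstacle is the set-theoretic verification that $\sigma_H(\mathscr{O}_{\theta_2})\subset\Omega$ so that Proposition~\ref{Propo-ball}-(iv) can be applied, since $\sigma_H$ need not preserve $\Omega$: the hyperplane $\partial H$ passes through the apex $a$ but not along the full symmetry axis $a+\mathbb{R}\eta$ of $\Omega$; the argument above handles this by splitting $\sigma_H(\mathscr{O}_{\theta_2})$ into its $H$- and $H^{\mathsf{c}}$-halves and using the polarization data on each half separately.
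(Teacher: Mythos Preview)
Your proposal is correct and follows essentially the same route as the paper, which simply cites \cite[Lemma~4.4 and proof of Theorem~1.8]{Anoop-Ashok23} for the identity $P_H(\Omega\setminus\mathscr{O}_{\theta_2})=\Omega\setminus\mathscr{O}_{\theta_1}$ and then invokes the argument of Theorem~\ref{thm-1.3}. You in fact supply more detail than the paper does---in particular the explicit check that $\sigma_H(\mathscr{O}_{\theta_2})\subset\Omega$ needed for Proposition~\ref{Propo-ball}-(iv), and the verification of the two non-equalities required for the strict form of Theorem~\ref{Thm-FKtype}---whereas the paper leaves all of this to the cited reference.
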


\noi \textbf{Acknowledgments.}
We thank Prof. Vladimir Bobkov (Ufa FRC-RAS, Russia) and Prof. T.~V. Anoop (IIT Madras, India) for their valuable suggestions and comments, which improved the presentation of the article. We are grateful to the anonymous reviewers for their insightful comments, which have significantly enhanced the contents of our article. This work is partly funded by the Department of Atomic Energy, Government of India, under project no. 12-R$\&$D-TFR-5.01-0520. 

\end{document}